\DeclareMathSymbol{\shortminus}{\mathbin}{AMSa}{"39} 
\newcommand{\marginparstretch}{0.6}
\let\oldmarginpar\marginpar
\renewcommand\marginpar[1]{\-\oldmarginpar[\framebox{\setstretch{\marginparstretch}\begin{minipage}{\marginparwidth}{\raggedleft\tiny #1}\end{minipage}}]{\framebox{\setstretch{\marginparstretch}\begin{minipage}{\marginparwidth}{\raggedright\tiny #1}\end{minipage}}}}
\numberwithin{equation}{section}
\numberwithin{figure}{section}
\theoremstyle{plain}
\newtheorem{theorem}{Theorem}[section]
\newtheorem*{keythmnonumber}{Theorem}
\newtheorem{proposition}[theorem]{Proposition}
\newtheorem{lemma}[theorem]{Lemma}
\theoremstyle{remark}
\newtheorem{remark}[theorem]{Remark}
\theoremstyle{remark}
\theoremstyle{definition}
\newtheorem{definition}[theorem]{Definition}
\newcommand\define[1]{#1}
\newcommand\cO{\mathcal{O}}
\DeclareMathOperator\Hom{Hom}
\DeclareMathOperator\dHom{Hom}
\newcommand\dHomk{\dHom^\bullet}
\providecommand{\cE}{\mathcal{E}}
\providecommand{\cF}{\mathcal{F}}
\providecommand{\cL}{\mathcal{L}}
\providecommand{\R}{\mathbb{R}}
\providecommand{\PP}{\mathbb{P}}
\providecommand{\Z}{\mathbb{Z}}
\providecommand{\C}{\mathbb{C}}
\providecommand{\F}{\mathbb{F}}
\providecommand{\Br}{\mathrm{Br}}
\providecommand{\Aut}{\mathrm{Aut}}
\providecommand{\AHilb}{\operatorname{\mathit{A}\text{-}Hilb}}
\providecommand{\SL}{\operatorname{SL}}
\providecommand{\diag}{\operatorname{diag}}
\newcommand\sm\shortminus
\newcommand\newrays{\tau}
\title[Quiver braid group action for a 3-fold crepant resolution]{Quiver braid group action \\ for a 3-fold crepant resolution}
\author{Will Donovan}
\address[Will Donovan]{Yau MSC, Tsinghua University, Haidian, Beijing, China; BIMSA, Yanqi Lake, Huairou, Beijing, China; Kavli IPMU (WPI), TODIAS, University of Tokyo, Kashiwanoha, Chiba, Japan}
\email{donovan@mail.tsinghua.edu.cn}
\author{Luyu Zheng}
\address[Luyu Zheng]{Yau MSC, Tsinghua University, Haidian, Beijing, China}
\email{zhengluyu84@gmail.com}
\thanks{The authors are supported by Yau MSC, Tsinghua University, and China TTP. The first author is supported by Yanqi Lake BIMSA}
\keywords{Crepant resolution, 3-fold, cyclic quotient singularity, derived category, autoequivalence, spherical twist, quiver braid group.}
\subjclass[2010]{Primary 14F08; Secondary 14J32, 18G80}
\begin{document}


\thispagestyle{empty}

\begin{abstract}
The 3-fold cyclic quotient singularity denoted $\tfrac{1}{7}(1,2,4)$ admits a crepant resolution $X$ with three exceptional Hirzebruch surfaces intersecting pairwise along curves. We show that the derived category $D(X)$ carries a faithful action of a quiver braid group, where the relevant quiver is a 3-cycle encoding the intersection data.
\end{abstract}
\maketitle


\setcounter{tocdepth}{1}
\tableofcontents

\section{Introduction}

The minimal resolutions~$M$ of Du Val surface singularities are key objects in the intersection of algebra, geometry, and theoretical physics. They admit an ADE classification and arise, for instance, as Slodowy slices, hyper-Kähler quotients, and asymptotically locally Euclidean (ALE) spaces. In particular, they are classic examples in mirror symmetry. 

The derived categories of coherent sheaves~$D(M)$ carry faithful Artin braid group actions~\cite{ST}, which may be thought of as mirror to symplectic braid group actions. In these actions, the braid group generators act by twist autoequivalences associated with the components of the exceptional locus. 
 
It is then natural to seek three-dimensional generalizations. A large class is provided by crepant resolutions of 3-fold cyclic quotient singularities, whose rich toric combinatorics makes their derived categories amenable to explicit study. However, the appropriate generalization of the above Artin braid group actions to this setting remains to be understood.

In this paper, we construct a faithful action of a \emph{quiver} braid group~\cite{GM,Qiu} in the above 3-fold setting. We work with a concrete example~$X$ given in Section~\ref{sec.main}, in which the exceptional locus of the resolution consists of three components~${S}_l$ each isomorphic to the Hirzebruch surface~$\mathbb{F}_2$. These naturally form a cycle in the sense that the intersection of ${S}_l$ and ${S}_{l+1}$ is a section of ${S}_l$ and a fibre of ${S}_{l+1}$, as  in Figure~\ref{fig.rel2}.

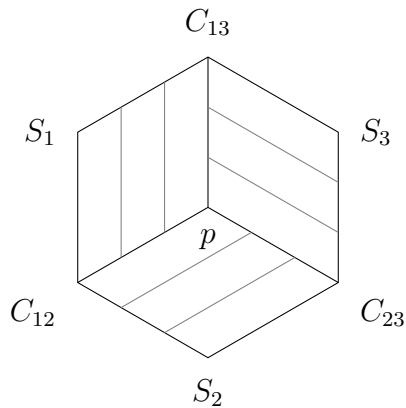
\begin{figure}[h!]
    \centering
    \begin{tikzpicture}[scale=2,semithick,
dot/.style = {circle, fill=black, inner sep=1pt}]
        \coordinate (O) at (0,0);
        \coordinate (V2) at (90:1); 
\coordinate (V3) at (210:1); 
\coordinate (V1) at (330:1);
\coordinate (S2) at (-90:1); 
\coordinate (S1) at (150:1); 
\coordinate (S3) at (30:1);
\coordinate (V11) at ($ (V2) !1/3! (S1) $);
\coordinate (V12) at ($ (O) !1/3! (V3) $);
\coordinate (V11b) at ($ (V2) !2/3! (S1) $);
\coordinate (V12b) at ($ (O) !2/3! (V3) $);
\coordinate (V21) at ($ (V3) !1/3! (S2) $);
\coordinate (V22) at ($ (O) !1/3! (V1) $);
\coordinate (V21b) at ($ (V3) !2/3! (S2) $);
\coordinate (V22b) at ($ (O) !2/3! (V1) $);
\coordinate (V31) at ($ (V1) !1/3! (S3) $);
\coordinate (V32) at ($ (O) !1/3! (V2) $);
\coordinate (V31b) at ($ (V1) !2/3! (S3) $);
\coordinate (V32b) at ($ (O) !2/3! (V2) $);

\draw (S3) -- (V2) -- (S1) -- (V3) -- (S2) -- (V1) -- cycle; 
\draw (V1) -- (O);
\draw (V2) -- (O);
\draw (V3) -- (O);
\draw[black!50] (V11) -- (V12);
\draw[black!50] (V11b) -- (V12b);
\draw[black!50] (V21) -- (V22);
\draw[black!50] (V21b) -- (V22b);
\draw[black!50] (V31) -- (V32);
\draw[black!50] (V31b) -- (V32b);
 
\node[label={180:${S}_1$}] at (S1) {};
\node[label={270:${S}_2$}] at (S2) {};
\node[label={0:${S}_3$}] at (S3) {};
\node[dot,label={270:$p$}] at (O) {};
    \end{tikzpicture}
    \caption{The three exceptional surfaces in $X$}
    \label{fig.rel2}
\end{figure}

This configuration suggests the quiver shown in Figure~\ref{fig.quiverW}, with a superpotential~$W$ corresponding to the cycle. We show that the associated quiver braid group~$G$, given below, acts faithfully on the derived category $D(X)$.

\begin{figure}[h!]
\begin{center}
\begin{tikzcd}
	& 1 \\
	2 && 3
	\arrow[from=1-2, to=2-1,"a"']
	\arrow[from=2-1, to=2-3,"b"']
	\arrow[from=2-3, to=1-2,"c"']
\end{tikzcd}
\end{center}

\caption{Quiver $(Q,W=cba)$}

\label{fig.quiverW}
\end{figure}
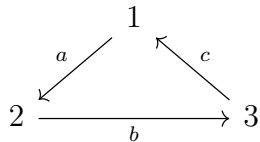

\subsection{Existing work} 

Around 2000, Seidel and R.~P.~Thomas~\cite{ST} constructed a faithful action of the Artin braid group $\Br_{n+1}$ on $D(X)$ from an $A_n$-configuration of spherical objects in $D(X)$. We briefly recall this theory in Section~\ref{sec.Generalities}.  Subsequently, Brav and H.~Thomas~\cite{BT} established faithfulness for configurations of $2$-spherical objects in all ADE types. Their argument was later simplified by Hirano and Wemyss~\cite{HW} using methods involving hyperplane arrangements.

Qiu and Woolf~\cite{QW} gave a faithful action of a certain braid group on $D(\Gamma_N\, Q)$ for $\Gamma_N\, Q$ the Calabi--Yau-$N$ Ginzburg algebra associated with a Dynkin quiver $Q$. Independently, Nordskova and Volkov~\cite{NV} proved general faithfulness results in an enhanced triangulated category setting for configurations in all simply-laced Dynkin types.

Seidel and Thomas give examples of $3$-folds $X$ containing configurations of surfaces which yield $A_n$-configurations and thence braid group actions~\cite[end of Section~3]{ST}. The first author and Wemyss showed that, for a $3$-fold $X$ with Gorenstein terminal singularities and individually floppable irreducible exceptional curves, the fundamental group of the complexified complement of a real hyperplane arrangement acts on $D(X)$ via flop functors~\cite[Theorem~1.2]{DW}. The pure braid groups, namely the kernels of the natural projections $\Br_n\to \mathfrak{S}_n$, occur as examples of these fundamental groups.

\subsection{Main theorem}\label{sec.main}

Take $X=\AHilb(\C^3)$ a crepant resolution of~$\C^3/A$, as recalled in Section~\ref{AHilb}, with $A=\mu_7$ acting with weights~$(1,2,4)$. The quiver braid group~\cite{GM,Qiu} associated with the quiver with potential $(Q,W)$ in Figure~\ref{fig.quiverW} is
\[
G=\langle g_1,g_2,g_3 \mid g_1g_2g_3g_1=g_2g_3g_1g_2,\;\; g_ig_jg_i=g_jg_ig_j \rangle,
\]
also denoted $\mathrm{AT}(Q,W)$ and called the algebraic twist group by Qiu.

We show that the spherical twists associated with the exceptional surfaces in $X$ realize this group:

\begin{keythmnonumber}[Theorem~\ref{thm.main}]
    The spherical twists $T_l$ associated with the  three exceptional surfaces $S_l$ in $X$ generate a subgroup of $\mathrm{Aut}\,D(X)$ isomorphic to $G$ above.
\end{keythmnonumber}

We sketch the idea of the proof. By the toric analysis of Reid and Craw~\cite{CR,Craw}, the junior simplex of $X$ is shown in Figure~\ref{fig.js7cycle}. The fan of $X$ is the cone over this simplex, and the three highlighted vertices correspond to exceptional surfaces~${S}_l$ each isomorphic to the Hirzebruch surface~$\mathbb{F}_2$.

\begin{figure}[h!]
\begin{center}
\begin{tikzpicture}[scale=3,semithick,
dot/.style = {circle, fill=black, inner sep=1.5pt}]

\coordinate (V1) at (90:1); 
\coordinate (V2) at (210:1); 
\coordinate (V3) at (330:1); 

\coordinate (V12mid) at ($ (V1) !1/3! (V2) $);
\coordinate (V23mid) at ($ (V2) !1/3! (V3) $);
\coordinate (V31mid) at ($ (V3) !1/3! (V1) $);

\coordinate (P1) at ($ (V12mid) !1/7! (V3) $);
\coordinate (P2) at ($ (V23mid) !1/7! (V1) $);
\coordinate (P3) at ($ (V31mid) !1/7! (V2) $);

\draw (V1) -- (V2) -- (V3) -- cycle; 

\draw (V3) -- (P1); 
\draw (V1) -- (P2); 
\draw (V2) -- (P3);

\draw (V2) -- (P1); 
\draw (V3) -- (P2); 
\draw (V1) -- (P3);

\node[dot,label={180:${S}_1$}] at (P1) {};
\node[dot,label={270:${S}_2$}] at (P2) {};
\node[dot,label={270:${S}_3$}] at (P3) {};

\end{tikzpicture}
\end{center}
\caption{Junior simplex of $X$}
\label{fig.js7cycle}
\end{figure}
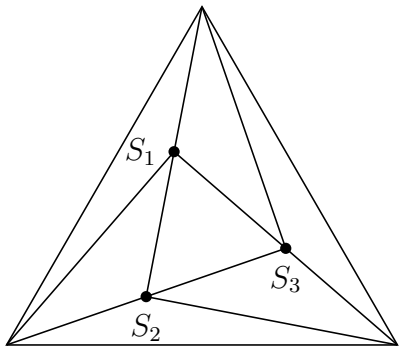

The structure sheaves of the ${S}_l$ induce spherical objects $\mathcal{E}_l$ on $X$; we write $T_l$ for the associated twist autoequivalences on $D(X)$. The $\{\mathcal{E}_l,\mathcal{E}_m\}$ form $A_2$-configurations, hence the degree~$3$ braid relations follow from~\cite{ST}. The remaining relation requires a new strategy. We first describe $T_1\mathcal{E}_2$ as a pushforward of a line bundle on ${S}_1\cup{S}_2$. Using this, we find that  $T_1\mathcal{E}_2$ is orthogonal to $\cE_3$. This implies that $T_3(T_1\mathcal{E}_2)\cong T_1\mathcal{E}_2$, and by general facts about spherical twists the relation $T_1T_2T_3T_1 = T_2T_3T_1T_2$ follows, giving a $G$-action on $D(X)$. Finally, setting $g_3' = g_2 g_3 g_2^{-1}$ yields an isomorphism $G \cong \Br_4$, and faithfulness of the $G$-action may then be deduced from the faithfulness result of~\cite{ST}.

\subsection{Contents}
In Section~\ref{sec.sph}, we review spherical objects in $D(X)$, their associated twists, $A_n$-configurations, and braid group actions; we establish structural results for constructing new configurations via spherical twists on Calabi–Yau varieties and compute $\Hom$ spaces associated with Cartier divisors.
In Section~\ref{sec.toric}, we briefly recall toric varieties via fans and orbits, construct toric line bundles from Cartier divisors, and discuss examples including Hirzebruch surfaces and $A$-Hilbert schemes, describing the crepant resolution and compact exceptional surfaces. 
In Section~\ref{sec.cyc}, we explain the geometry of the three irreducible compact exceptional surfaces in $X$. In Section~\ref{sec.proof}, we prove our main theorem.

\subsection{Conventions}

We work over $\C$, write $D(X)$ for the bounded derived category of coherent sheaves on a variety, and take functors to be  derived.

\subsection*{Acknowledgements}
We would like to thank Nick Addington, Alastair Craw and Yukari Ito for useful discussions, Shan Peng for conversations on affine braid groups, and Qiu Yu for suggestions on algebraic twist groups and quiver mutations.

\section{Spherical objects}\label{sec.sph}

\subsection{Generalities}\label{sec.Generalities} We recall the theory of spherical twists from~\cite{ST}, for $X$ smooth of dimension~$d$. Here for simplicity we assume $X$ projective, though our application in Section~\ref{sec.proof} is to $X$ quasi-projective: we note there why results extend.

\begin{definition}
An object $\mathcal{E}\in D(X)$ is \emph{spherical} if 
\begin{itemize}
    \item 
    $\dHomk(\mathcal{E},\mathcal{E})=\mathbb{C}\oplus\mathbb{C}[-d]$
    and 
    \item $\mathcal{E}\otimes\omega_X\cong\mathcal{E}$
\end{itemize}
where $\dHomk(\mathcal{A},\mathcal{B})$ is the graded vector space $\bigoplus_r \Hom_{D(X)}(\mathcal{A},\mathcal{B}[r])$.\end{definition}

For any $\mathcal{F} \in D(X)$, one can define the \emph{spherical twist} $T_{\mathcal{F}}\colon D(X)\to D(X)$ as in~\cite[Definition~8.3]{Huy}. Moreover, for any spherical object $\mathcal{E}$, the action of the spherical twist on objects can be expressed as
\begin{equation}\label{eq.cone}
T_{\mathcal{E}}(\mathcal{A}) \cong
\operatorname{Cone}\big(
    \mathcal{E} \otimes \dHomk(\mathcal{E}, \mathcal{A})
    \longrightarrow \mathcal{A}
\big),
\end{equation}
where the morphism is the natural evaluation map~\cite[Exercise~8.5]{Huy}.
If $\mathcal{E}$ is spherical, then $T_{\mathcal{E}}$ is an exact autoequivalence of $D(X)$, see for instance~\cite[Proposition~8.6]{Huy}.
For two spherical objects $\mathcal{E},\mathcal{F}\in D(X)$ with $\dim_{\mathbb{C}}\dHomk(\mathcal{E},\mathcal{F})=h$, the following hold:
\begin{itemize}
    \item If $h=0$, then $T_{\mathcal{E}}T_{\mathcal{F}}\cong T_{\mathcal{F}}T_{\mathcal{E}}$.
    \item If $h=1$, then  $T_{\mathcal{E}}T_{\mathcal{F}}T_{\mathcal{E}}
      \cong T_{\mathcal{F}}T_{\mathcal{E}}T_{\mathcal{F}}$ (the braid relation).
    \item If $h\ge2$, then  
    $\langle T_{\mathcal{E}},T_{\mathcal{F}}\rangle$ forms a subgroup of $\Aut\, D(X)$~\cite[Theorem~1.1]{Keat}.
\end{itemize}

\begin{definition}
An \emph{$A_n$-configuration} in $D(X)$ is a collection of spherical objects 
$\mathcal{E}_1,\dots,\mathcal{E}_n$ such that
\[
 \dim_{\mathbb{C}}\dHomk(\mathcal{E}_i,\mathcal{E}_j)=
 \begin{cases}
   1, & |i-j|=1,\\
   0, & |i-j|\ge 2.
 \end{cases}
\]
\end{definition}
For $d\geq 2$, such a configuration induces a faithful action of the braid group 
$\Br_{n+1}$ on $D(X)$ via spherical twists $T_i\coloneqq T_{\cE_i}$~\cite[Theorem~1.3]{ST}.

\begin{lemma}\label{lemma.exten}
    Take $\{\cE_1,\cE_2\}$ an $A_2$-configuration with $\dHom^r(\cE_1,\cE_2)=\C$.
    \begin{itemize}
        \item[(1)] $T_2^{-1}\cE_1\cong T_1\cE_2[r-1]$
        \item[(2)] If $r=1$, ${T_1}\cE_2$ is determined by a non-split exact triangle $\cE_2\rightarrow T_1\cE_2\rightarrow\cE_1\rightarrow$.
    \end{itemize}
\end{lemma}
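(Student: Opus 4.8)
The plan is to read off both statements from the functorial triangle \eqref{eq.cone} defining $T_1$, together with the analogous triangle for the inverse twist, and then to recognize the resulting objects as extensions classified by the one-dimensional space $\dHom^r(\cE_1,\cE_2)$. Throughout I would use that $\dHomk(\cE_1,\cE_2)=\C$, being concentrated in degree $r$, is the graded vector space $\C[-r]$, so that tensoring $\cE_1$ with it simply shifts.

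First I would treat (2), which is the case $r=1$. Here $\cE_1\otimes\dHomk(\cE_1,\cE_2)=\cE_1[-1]$ and the evaluation map in \eqref{eq.cone} is the generator $\cE_1[-1]\to\cE_2$ of $\dHom^1(\cE_1,\cE_2)$, so $T_1\cE_2=\operatorname{Cone}(\cE_1[-1]\to\cE_2)$. Rotating the cone triangle gives precisely $\cE_2\to T_1\cE_2\to\cE_1\xrightarrow{+1}$, as claimed, with connecting morphism in $\dHom^1(\cE_1,\cE_2)$. For general $r$, the same computation yields $T_1\cE_2=\operatorname{Cone}(\cE_1[-r]\to\cE_2)$; shifting by $[r-1]$ and rotating produces a triangle $\cE_2[r-1]\to T_1\cE_2[r-1]\to\cE_1\xrightarrow{\delta}\cE_2[r]$.

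For (1) I would then compute the other side using the standard triangle for the inverse twist, $T_2^{-1}\cE_1\to\cE_1\to\dHomk(\cE_1,\cE_2)^\vee\otimes\cE_2\xrightarrow{+1}$ (see \cite[\S8]{Huy}). Since $\dHomk(\cE_1,\cE_2)^\vee=\C[r]$, this reads $T_2^{-1}\cE_1\to\cE_1\to\cE_2[r]\xrightarrow{+1}$, which I rotate into $\cE_2[r-1]\to T_2^{-1}\cE_1\to\cE_1\xrightarrow{\delta'}\cE_2[r]$. Thus both $T_1\cE_2[r-1]$ and $T_2^{-1}\cE_1$ present themselves as cones of morphisms $\delta,\delta'$ lying in the one-dimensional space $\dHom^r(\cE_1,\cE_2)=\C$.

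It remains to compare $\delta$ and $\delta'$, and this is the only delicate point. The key observation is that neither vanishes: both $T_1\cE_2$ and $T_2^{-1}\cE_1$ are images of spherical objects under autoequivalences, hence spherical, so in particular $\dHom^0$ of each with itself is one-dimensional; were $\delta$ (resp.\ $\delta'$) zero, the corresponding triangle would split as $\cE_2[r-1]\oplus\cE_1$, forcing $\dHom^0$ to be at least two-dimensional, a contradiction. Hence $\delta$ and $\delta'$ are two nonzero elements of a one-dimensional space, so proportional by a nonzero scalar; since rescaling one vertex is an isomorphism of triangles, cones of proportional nonzero maps agree, giving $T_2^{-1}\cE_1\cong T_1\cE_2[r-1]$. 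Everything apart from this non-vanishing of the extension classes is a direct manipulation of the defining triangles and their rotations.
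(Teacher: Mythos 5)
Your proposal is correct and takes essentially the same route as the paper: part (2) is the cone triangle \eqref{eq.cone} plus one-dimensionality of $\Hom(\cE_1,\cE_2[1])$, and part (1) is exactly the computation the paper outsources to the proof of \cite[Proposition~2.13]{ST}, namely realizing both $T_1\cE_2[r-1]$ and $T_2^{-1}\cE_1$ as cones of nonzero classes in the one-dimensional space $\dHom^r(\cE_1,\cE_2)$. Your non-vanishing argument via sphericality of the twisted objects (a split triangle would force $\dim\dHom^0\geq 2$) is a clean way to justify the only delicate step.
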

\begin{proof} For (1), see~\cite[proof of Proposition~2.13]{ST}. For (2), the non-split exact triangle is obtained using~\eqref{eq.cone}, and it determines $T_1\cE_2$ since $\Hom_{D(X)}(\cE_1,\cE_2[1])=\C$.  
\end{proof}

\begin{proposition}\label{prop.a23}
    Assume that $X$ is Calabi--Yau, so that $\omega_{X}$ is trivial.
    \begin{itemize}
        \item[(1)] If $\{\cE,\cF\}$ is an $A_2$-configuration, then $\{\cE,T_{\cE}\cF\}$ is also.
        \item[(2)] If $\{\mathcal{E}_l,\mathcal{E}_m\}$ are $A_2$-configurations for each pair $\{l,m\}\subset\{1,2,3\}$, and furthermore  $T_1\cE_2\in\cE_3^\perp$, then $\{\cE_1,\cE_2,T_2\cE_3\}$ is an $A_3$-configuration.
            \end{itemize}
\end{proposition}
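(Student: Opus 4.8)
The plan is to treat the two parts in turn, using throughout that a spherical twist is an autoequivalence and that on a Calabi--Yau variety the condition $\cE\otimes\omega_X\cong\cE$ in Definition~\ref{definition.sph} is automatic, so that sphericality reduces to the graded-endomorphism condition alone.

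For (1), I would first observe that $T_{\cE}\cF$ is spherical: since $T_{\cE}$ is an autoequivalence, $\dHomk(T_{\cE}\cF,T_{\cE}\cF)\cong\dHomk(\cF,\cF)=\C\oplus\C[-d]$, and the $\omega_X$-condition is free on a Calabi--Yau. It then remains to compute $\dim_{\C}\dHomk(\cE,T_{\cE}\cF)$. The cleanest route is to move the twist across the $\Hom$: applying the equivalence $T_{\cE}^{-1}$ gives $\dHomk(\cE,T_{\cE}\cF)\cong\dHomk(T_{\cE}^{-1}\cE,\cF)$, and the standard computation $T_{\cE}\cE\cong\cE[1-d]$, obtained from~\eqref{eq.cone} or cited from~\cite{ST,Huy}, yields $T_{\cE}^{-1}\cE\cong\cE[d-1]$. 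Hence $\dHomk(\cE,T_{\cE}\cF)$ agrees with $\dHomk(\cE,\cF)$ up to a shift by $d-1$, so it is again one-dimensional and $\{\cE,T_{\cE}\cF\}$ is an $A_2$-configuration. Alternatively one can feed the defining triangle $\cE[-r]\to\cF\to T_{\cE}\cF$ into the functor $\dHomk(\cE,-)$ and read off the single surviving class in the resulting long exact sequence.

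For (2), I would order the triple as $(\cE_1,\cE_2,T_2\cE_3)$ and check the three conditions of Definition~\ref{def.con}. Sphericality of $T_2\cE_3$ is again automatic since $T_2$ is an autoequivalence. The $(1,2)$-entry $\dim_{\C}\dHomk(\cE_1,\cE_2)=1$ is part of the hypothesis. The $(2,3)$-entry $\dim_{\C}\dHomk(\cE_2,T_2\cE_3)=1$ is exactly part~(1) applied to the $A_2$-configuration $\{\cE_2,\cE_3\}$, so it requires no new work.

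The one genuinely load-bearing step, and the part I expect to be the main obstacle, is the non-adjacency condition $\dHomk(\cE_1,T_2\cE_3)=0$; this is where the hypothesis $T_1\cE_2\in\cE_3^{\perp}$ must be converted into a usable form. My plan is to push $T_2$ across once more, $\dHomk(\cE_1,T_2\cE_3)\cong\dHomk(T_2^{-1}\cE_1,\cE_3)$, and then rewrite $T_2^{-1}\cE_1$ using Lemma~\ref{lemma.exten}(1): if $\dHom^r(\cE_1,\cE_2)=\C$ then $T_2^{-1}\cE_1\cong T_1\cE_2[r-1]$. This reduces the claim to $\dHomk(T_1\cE_2,\cE_3)=0$, i.e. to left-orthogonality of $T_1\cE_2$ against $\cE_3$. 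The hypothesis supplies the right-orthogonality $\dHomk(\cE_3,T_1\cE_2)=0$; since $X$ is Calabi--Yau, Serre duality $\dHom^s(T_1\cE_2,\cE_3)\cong\dHom^{d-s}(\cE_3,T_1\cE_2)^{*}$ makes the two orthogonality conditions equivalent, and the desired vanishing follows. With all three entries verified, $\{\cE_1,\cE_2,T_2\cE_3\}$ is an $A_3$-configuration. The subtle points to keep straight are purely bookkeeping: which orthogonal ($\cE_3^{\perp}$ versus ${}^{\perp}\cE_3$) the hypothesis provides, and that degree shifts such as the $[r-1]$ are irrelevant to the total dimension of a graded $\Hom$ space.
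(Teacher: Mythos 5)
Your proposal is correct and follows essentially the same route as the paper: sphericality of the twisted object via the equivalence plus the Calabi--Yau condition, the computation via $T_{\cE}^{-1}\cE\cong\cE[d-1]$ for part (1), and for part (2) the reduction through Lemma~\ref{lemma.exten}(1) to the orthogonality of $T_1\cE_2$ against $\cE_3$, with Serre duality handling the remaining direction. The only cosmetic difference is that the paper computes $\dHomk(T_2\cE_3,\cE_1)$ directly (so the hypothesis $T_1\cE_2\in\cE_3^{\perp}$ applies without a further duality step), whereas you compute $\dHomk(\cE_1,T_2\cE_3)$ and then dualize; both are fine.
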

\begin{proof}
By Calabi--Yau Serre duality, it suffices to check the $A_n$-configuration condition for $i>j$. For (1), $T_{\cE}\cF$ is  spherical using that $T_{\cE}$ is an equivalence, and that $X$ is Calabi--Yau. Then using $T_\cE\cE\cong \cE[1-d]$, we obtain
    \begin{equation*}
        \dHomk(T_{\cE}\cF,\cE)\cong\dHomk(\cF,T_{\cE}^{-1}\cE)\cong  \dHomk(\cF,\cE)[d-1].
    \end{equation*}
    For (2), we have 
    \begin{equation*}
\dHomk(T_2\cE_3,\cE_1)\cong \dHomk(\cE_3,T_2^{-1}\cE_1)=0
    \end{equation*}
using Lemma~\ref{lemma.exten}(1), and applying (1) to $\{\mathcal{E}_2,\mathcal{E}_3\}$ then gives the result.
\end{proof}

\subsection{Calculations}

The following results will help us calculate $\Hom$ spaces between spherical objects later. Take two Cartier divisors $D_1$ and $D_2$ in ${X}$ smooth quasi-projective with embeddings $i_l:D_l\to {X}$. Assume their intersection $C\coloneqq D_1\cap D_2$ is also Cartier in each $D_l$, with embeddings $j_l\colon C\to D_l$ and $k\colon C\to X$.
\begin{equation}\label{fig.inters}
    \begin{tikzcd}
                   & {X}                                   &                    \\
{D_1} \arrow[ru, hook,"i_1"] &                                     & {D_2} \arrow[lu, hook', "i_2"'] \\
                   & C \arrow[lu, hook, "j_1"] \arrow[ru, hook', "j_2"'] \arrow[uu, hook, "k"] &                   
\end{tikzcd}
\end{equation}

\begin{proposition}\label{prop.extgen}
    Let $\cF$ be an object of $D(D_2)$. Then
\begin{equation*}
    \mathcal{H}om_{X}(i_{1*}\mathcal{O}_{D_1}, i_{2*}\mathcal{F})\cong k_{*}( \cO_{C}(D_1)\otimes j_{2}^*\mathcal{F})[-1].
\end{equation*}
\end{proposition}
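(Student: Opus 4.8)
The plan is to resolve ${i_1}_*\cO_{D_1}$ by locally free sheaves, using that $D_1$ is Cartier, and then to dualise. Writing $s$ for the section of $\cO_X(D_1)$ cutting out $D_1$, there is a short exact sequence
\[
0\to\cO_X(-D_1)\xrightarrow{\;s\;}\cO_X\to{i_1}_*\cO_{D_1}\to 0,
\]
so that ${i_1}_*\cO_{D_1}$ is quasi-isomorphic to the two-term complex $[\cO_X(-D_1)\xrightarrow{s}\cO_X]$ in degrees $-1,0$. Since this is a bounded complex of locally free sheaves, I would compute the derived sheaf Hom into any $\cG$ by dualising termwise, obtaining the two-term complex $[\cG\xrightarrow{\;\cdot s\;}\cG\otimes\cO_X(D_1)]$ in degrees $0,1$, with differential given by multiplication by $s$.

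Next I would specialise to $\cG={i_2}_*\cF$ and apply the projection formula, writing $\cG\otimes\cO_X(D_1)\cong{i_2}_*\big(\cF\otimes i_2^*\cO_X(D_1)\big)$. Because $i_2$ is a closed immersion, ${i_2}_*$ is exact, and the multiplication-by-$s$ map on a sheaf supported on $D_2$ depends only on the restricted section $t\coloneqq i_2^*s$; hence the whole complex becomes ${i_2}_*$ applied to $[\cF\xrightarrow{\;\cdot t\;}\cF\otimes i_2^*\cO_X(D_1)]$, placed in degrees $0,1$.

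The key geometric input is then that, since $C=D_1\cap D_2$ is Cartier in $D_2$ by hypothesis, $t$ is a regular section of $i_2^*\cO_X(D_1)\cong\cO_{D_2}(C)$ cutting out $C$. It therefore fits into the short exact sequence on $D_2$,
\[
0\to\cO_{D_2}\xrightarrow{\;t\;}\cO_{D_2}(C)\to{j_2}_*\cO_C(D_1)\to 0,
\]
where I identify the cokernel by the projection formula, ${j_2}_*\cO_C\otimes\cO_{D_2}(C)\cong{j_2}_*\big(\cO_C\otimes (i_2j_2)^*\cO_X(D_1)\big)={j_2}_*\cO_C(D_1)$. Thus $[\cO_{D_2}\xrightarrow{t}\cO_{D_2}(C)]$ in degrees $0,1$ is quasi-isomorphic to ${j_2}_*\cO_C(D_1)[-1]$; tensoring (derived) with $\cF$ and applying ${i_2}_*$ yields the claimed identification ${i_2}_*\big({j_2}_*\cO_C(D_1)\otimes\cF\big)[-1]$.

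The step requiring the most care is this last geometric identification. I must check that $t=i_2^*s$ really is a non-zero-divisor section cutting out $C$ with its given scheme structure — this is precisely where the Cartier hypothesis on $C$ in $D_2$ is used — and that $\cO_C(D_1)$, meaning the restriction $(i_2j_2)^*\cO_X(D_1)$, coincides with the twist $\cO_C\otimes j_2^*\cO_{D_2}(C)$ appearing in the cokernel, i.e.\ that $\cO_{D_2}(C)=i_2^*\cO_X(D_1)$. I would also keep $\otimes$ derived throughout, so that the statement remains valid for an arbitrary complex $\cF\in D(D_2)$, and track the single cohomological shift $[-1]$ consistently; a misplaced shift is the likeliest pitfall.
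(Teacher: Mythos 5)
Your argument is correct, but it is organized differently from the paper's. The paper first applies the sheaf-Hom adjunction for $i_2$ to reduce to computing $(i_2^*{i_1}_*\cO_{D_1})^\vee$ on $D_2$, identifies $i_2^*{i_1}_*\cO_{D_1}\cong {j_2}_*\cO_C$ by citing a derived base-change result for the Cartesian square, and only then runs the restricted Koszul sequence to compute the dual of ${j_2}_*\cO_C$. You instead resolve ${i_1}_*\cO_{D_1}$ globally by the two-term Koszul complex on $X$, dualize termwise, and restrict; this bypasses the base-change citation entirely (indeed, your restriction of the resolution to $D_2$ \emph{is} a direct proof of that base-change statement in this special case), and it makes completely explicit where the hypothesis that $C$ is Cartier in $D_2$ enters, namely in showing that $t=i_2^*s$ is a non-zero-divisor so that the restricted complex has cohomology only in degree one. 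The paper's route is shorter on the page and more modular (the dualization step is isolated as a claim that holds for any sheaf of the form ${j_2}_*\cO_C$), while yours is more self-contained and elementary. The one point you flag but should actually verify is that $t$ is regular: its zero scheme is the scheme-theoretic intersection $D_1\cap D_2=C$, so the ideal $(t)$ is locally generated by a non-zero-divisor $t'$ by the Cartier hypothesis, and writing $t'=vt$ shows any $a$ with $at=0$ satisfies $at'=0$, hence $a=0$. With that filled in, and with the tensor products taken derived as you note, the proof is complete.
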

\begin{proof}
We have
\begin{equation*}
    \mathcal{H}om_{X}( i_{1*}\mathcal{O}_{D_1},i_{2*}\cF)
    \cong i_{2*}\mathcal{H}om_{D_2}({i_2^*} i_{1*}\mathcal{O}_{D_1},\cF)\cong i_{2*}(({i_2^*} i_{1*}\mathcal{O}_{D_1})^\vee\otimes\cF).
\end{equation*}
Note that $i_1\colon  D_1\to {X}$ is a closed immersion hence proper. By \cite[Proposition~A.1]{Add}, we then have base change for the square~\eqref{fig.inters}, so that
${i_2^*} i_{1*}\mathcal{O}_{D_1}\cong  j_{2*}j_1^*\mathcal{O}_{D_1}=  j_{2*}\mathcal{O}_{C}$.

To complete the argument, we claim $( j_{2*}\mathcal{O}_{C})^\vee\cong  j_{2*}\cO_{C}(D_1)[-1]$ and apply the projection formula.  This isomorphism is well known for $D_2$ and $C$ smooth, for instance using~\cite[Corollary~3.40]{Huy}. However smoothness is not needed, as follows.\footnote{We will apply this proposition in a singular case in the proof of Proposition~\ref{prop.ext1}.}
    
    Restricting the exact sequence $0\rightarrow\mathcal{O}_X(D_1)^\vee\rightarrow\mathcal{O}_X\rightarrow i_{1*}\mathcal{O}_{D_1}\rightarrow0$ to $D_2$ and using that $C=D_1\cap{D_2}$ gives  
    \begin{equation}\label{eq.res_on_D2}
        0\longrightarrow\mathcal{O}_{D_2}(D_1)^\vee\overset{s^\vee}{\longrightarrow}\mathcal{O}_{D_2}\longrightarrow  j_{2*}\mathcal{O}_{C}\longrightarrow 0
    \end{equation}
exact, and tensoring by $\mathcal{O}_{D_2}(D_1)$ yields the top line of a commutative diagram:
    \begin{equation*}
        \begin{tikzcd}
            0&{\mathcal{O}_{D_2}(D_1)^\vee\otimes\mathcal{O}_{D_2}(D_1)}&{\mathcal{O}_{D_2}(D_1)}&{ j_{2*}\mathcal{O}_{C}\otimes\mathcal{O}_{D_2}(D_1)}&0\\
            &{\mathcal{O}_{D_2}}&{\mathcal{O}_{D_2}(D_1)}&{ j_{2*}\mathcal{O}_{C}(D_1)}&
            \arrow[from=1-1, to=1-2]
            \arrow["s^\vee\otimes1", from=1-2, to=1-3]
            \arrow[from=1-3, to=1-4]
            \arrow[from=1-4, to=1-5]
            \arrow["s", from=2-2, to=2-3]
            \arrow[from=2-3, to=2-4]
            \arrow["\mathrm{ev}"', "\sim"{sloped}, from=1-2, to=2-2]
            \arrow["\sim"{sloped}, from=1-4, to=2-4]
            \arrow[equal, from=1-3, to=2-3]
        \end{tikzcd}
    \end{equation*}
    Hence $ j_{2*}\mathcal{O}_{C}(D_1)\cong \operatorname{Cone}(s)$. On the other hand $( j_{2*}\mathcal{O}_{C})^\vee\cong \operatorname{Cone}(s)[-1]$   using~\eqref{eq.res_on_D2}, and the claim is proved.
\end{proof}

\begin{lemma}\label{lemma.restocurve}
    If $C\cong \mathbb{P}^1$ has self-intersection number $s$ in $D_2$ then $\mathcal{O}_{C}(D_1)\cong\mathcal{O}_{C}(s).$
\end{lemma}
\begin{proof}
    We have $\mathcal{O}_{C}(D_1)=\mathcal{O}_X(D_1)|_{C}\cong \mathcal{O}_{D_2}(C)|_{C}\cong \mathcal{O}_{C}(s)$, where
the first isomorphism follows from $\mathcal{O}_{X}(D_1)|_{D_2}\cong \mathcal{O}_{D_2}(C)$, using that $D_1$ is Cartier in $X$ and  $C$ is  Cartier in  $D_2$. 
\end{proof}

\section{Toric geometry}\label{sec.toric}
\subsection{Generalities}

    A \define{toric variety} $X$ over $\C$ is a $d$-dimensional irreducible variety containing a dense open torus $T_N\cong (\C^*)^d$ such that the multiplication action on $T_N$ extends to an algebraic action $T_N\times X\to X$~\cite[Definition~3.1.1]{CLS}. Let $M=\Hom(T_N,\C^*)$ be the abelian group of characters of $T_N$ and $N=M^\vee$.\par 
    
    A \define{fan}~$\Sigma$ in $N_{\R}\coloneqq N\otimes_{\Z}\R$ is a finite collection of strongly convex rational polyhedral cones $\sigma\subset N_{\R}$ satisfying the usual compatibility conditions~\cite[Definition~3.1.2]{CLS}. We denote by $\Sigma(r)$ the set of $r$-dimensional cones in $\Sigma$, and write $\tau\prec \sigma$ if $\tau$ is a face of $\sigma$.\par 
    
    For any cone $\sigma$ in a fan $\Sigma$, the associated affine variety is 
    \[U_\sigma = \operatorname{Spec}\!\big(\C[\sigma^\vee \cap M]\big),
\qquad 
\sigma^\vee = \{\, m\in M_\R \mid \langle m,n\rangle \ge 0 
\text{ for all } n\in\sigma \,\}.
\]
If $\tau=\sigma_1\cap\sigma_2$ is a common face, then 
$U_\tau = U_{\sigma_1}\cap U_{\sigma_2}$.
Gluing these affine pieces gives the toric variety $X_\Sigma$ 
associated with~$\Sigma$~\cite[after Definition~3.1.2]{CLS}. There is an \define{orbit-cone correspondence} \cite[Theorem~3.2.6]{CLS} between cones and $T_N$-orbits in $X_{\Sigma}$ as follows.
\[
\{\sigma\in\Sigma\}
\;\Longleftrightarrow\;
\{\text{$T_N$-orbits in } X_\Sigma\}
\qquad
\sigma \longmapsto O(\sigma)\cong\Hom_\Z(\sigma^\perp\cap M,\C^*)
\]
Moreover, $\dim O(\sigma)=d-\dim\sigma$.  
Write $V(\sigma)=\overline{O(\sigma)}$.  
Then:
\begin{equation}\label{eq.orbitclosure}
V(\sigma)=\bigcup_{\sigma\preceq\tau} O(\tau)
\end{equation}

\begin{lemma}[\hspace*{-0.3em}{\cite[Lemmas~3.2.4 and~3.2.5]{CLS}}]\label{lemma.orbit}
    Let $N_\sigma$ be the sublattice of $N$ spanned by the points in $\sigma\cap N$ and let $N(\sigma)=N/N_\sigma$. Then $O(\sigma)\cong N(\sigma)\otimes_\Z \C^*$.
\end{lemma}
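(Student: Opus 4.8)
The plan is to build directly on the orbit--cone correspondence recorded above, which already identifies $O(\sigma)\cong\Hom_\Z(\sigma^\perp\cap M,\C^*)$. It then suffices to reinterpret the character lattice $\sigma^\perp\cap M$ in terms of $N(\sigma)$ and to apply a standard duality for algebraic tori. Concretely, I would first show that $\sigma^\perp\cap M$ is canonically the dual lattice $N(\sigma)^\vee=\Hom_\Z(N(\sigma),\Z)$, and then invoke the natural isomorphism $\Hom_\Z(L^\vee,\C^*)\cong L\otimes_\Z\C^*$ for any finitely generated free $\Z$-module $L$, taking $L=N(\sigma)$.

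For the first step, I would observe that a character $m\in M$ lies in $\sigma^\perp$ precisely when $\langle m,n\rangle=0$ for all $n\in N_\sigma$. Indeed, since $N_\sigma$ is generated by $\sigma\cap N$, vanishing on $N_\sigma$ is equivalent to vanishing on $\sigma\cap N$; and because every rational point of $\sigma$ is a positive rational multiple of a lattice point of $\sigma$, such points are dense in $\sigma$, so by linearity vanishing on $\sigma\cap N$ forces vanishing on all of $\sigma$. Thus $\sigma^\perp\cap M=\{m\in M:m|_{N_\sigma}=0\}$, which is exactly the image of the inclusion $\Hom_\Z(N/N_\sigma,\Z)\hookrightarrow\Hom_\Z(N,\Z)=M$ obtained by composing with the projection $N\to N(\sigma)$. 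Hence $\sigma^\perp\cap M\cong N(\sigma)^\vee$. Here $N_\sigma$ is saturated in $N$ --- the sublattice spanned by $\sigma\cap N$ coincides with $\mathrm{span}_\R(\sigma)\cap N$ for a rational cone --- so $N(\sigma)$ is free of rank $d-\dim\sigma$, matching $\dim O(\sigma)$.

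For the second step, I would use the natural map $N(\sigma)\otimes_\Z\C^*\to\Hom_\Z(N(\sigma)^\vee,\C^*)$ sending $v\otimes t$ to the homomorphism $\phi\mapsto t^{\phi(v)}$; choosing a basis of the free module $N(\sigma)$ identifies both sides with $(\C^*)^{d-\dim\sigma}$ and shows this map is an isomorphism. Combining this with the orbit--cone correspondence and the first step yields $O(\sigma)\cong N(\sigma)\otimes_\Z\C^*$, as claimed. I expect the only real care to be needed in the first step: one must verify that the annihilator of $\sigma$ in $M$ is the \emph{full} dual lattice $N(\sigma)^\vee$ rather than a proper finite-index subgroup, which is precisely where the density of rational points in $\sigma$ and the saturation of $N_\sigma$ enter. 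Everything else is formal lattice duality, so I do not anticipate a substantial obstacle.
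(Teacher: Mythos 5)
Your argument is correct, and the key subtlety --- that $\sigma^\perp\cap M$ is the \emph{full} dual lattice of $N(\sigma)$, which rests on $N_\sigma=\mathrm{Span}_\R(\sigma)\cap N$ being saturated --- is exactly the point that needs care; your handling of it is sound. The paper gives no proof of its own here, only the citation to \cite[Lemmas~3.2.4 and~3.2.5]{CLS}, and your derivation (dualizing $\sigma^\perp\cap M\cong N(\sigma)^\vee$ and applying $\Hom_\Z(L^\vee,\C^*)\cong L\otimes_\Z\C^*$ for free $L$) is essentially the standard argument found in that reference, so there is nothing to add.
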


For any ray $\rho\in \Sigma(1)$, write $D_{\rho}\in \mathbf{Pic}(X_\Sigma)$ for the divisor corresponding to $V(\rho)$. To indicate $\rho=\langle n \rangle$ we often simply write the generator $n$ when no confusion seems likely.

\subsection{Line bundles}\label{sect.linebundles}

For a Cartier divisor $D=\sum_{\rho\in\Sigma(1)} a_\rho D_\rho$
on $X_\Sigma$, we construct a new fan 
$\Sigma\times D$ in $N_\R\times\R$ such that $X_{\Sigma\times D}$ is isomorphic to the total space of the line bundle $\mathcal{O}_{X_\Sigma}(D)$ with bundle projection induced by projection away from $\R$~\cite[Chapter~7.3]{CLS}. Namely, for each cone $\sigma\in\Sigma$, let
\begin{equation*}
    \tilde\sigma
\coloneqq \big\langle (\underline{0},1),\,(u_\rho,-a_\rho)\big| \rho\preceq\sigma,\,\rho\in\Sigma(1) \big\rangle
\end{equation*}
where $u_\rho$ denotes the primitive generator of $\rho$. Then $\Sigma\times D$ is given by the $\tilde\sigma$ for all $\sigma\in\Sigma$, along with all their faces. 

\begin{proposition}\label{prop.orbit}
The orbit $O(\sigma)\subset X_\Sigma$ is naturally identified with 
$O(\tilde{\sigma})\subset X_{\Sigma\times D}$.
\end{proposition}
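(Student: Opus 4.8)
The plan is to identify both orbits via Lemma~\ref{lemma.orbit}, which expresses $O(\sigma)\cong N(\sigma)\otimes_\Z\C^*$ purely in terms of the quotient lattice $N(\sigma)=N/N_\sigma$. Writing $\widetilde N=N\times\Z$ for the lattice of the fan $\Sigma\times D$ and $\pi\colon\widetilde N\to N$ for the projection away from the last coordinate, I would first observe that $\pi$ sends each generator of $\tilde\sigma$ in~\eqref{eq.timesD} into $\sigma$, since $(\underline 0,1)\mapsto 0\in\sigma$ and $(u_\rho,-a_\rho)\mapsto u_\rho\in\sigma$. Thus $\pi(\tilde\sigma)\subseteq\sigma$ and $\pi$ is a map of fans; the induced toric morphism $X_{\Sigma\times D}\to X_\Sigma$ is the bundle projection, and it is along this morphism that I will exhibit the natural identification. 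The whole statement then reduces to the lattice computation $\widetilde N_{\tilde\sigma}=N_\sigma\times\Z$, after which
\[
\widetilde N(\tilde\sigma)=\widetilde N/\widetilde N_{\tilde\sigma}=(N\times\Z)/(N_\sigma\times\Z)\cong N/N_\sigma=N(\sigma)
\]
canonically, and applying Lemma~\ref{lemma.orbit} on both sides gives $O(\tilde\sigma)\cong\widetilde N(\tilde\sigma)\otimes_\Z\C^*\cong N(\sigma)\otimes_\Z\C^*\cong O(\sigma)$.

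The core step is therefore to prove $\widetilde N_{\tilde\sigma}=N_\sigma\times\Z$, where $\widetilde N_{\tilde\sigma}$ is the subgroup of $\widetilde N$ generated by $\tilde\sigma\cap\widetilde N$. Since $(\underline 0,1)\in\tilde\sigma\cap\widetilde N$ and $\widetilde N_{\tilde\sigma}$ is a group, the vertical sublattice $\{0\}\times\Z$ lies in $\widetilde N_{\tilde\sigma}$; this pins down the kernel of $\pi$ restricted to $\widetilde N_{\tilde\sigma}$ as exactly $\{0\}\times\Z$. It then remains to check that $\pi$ maps $\tilde\sigma\cap\widetilde N$ onto $\sigma\cap N$, so that $\pi(\widetilde N_{\tilde\sigma})=\Z\langle\sigma\cap N\rangle=N_\sigma$; the two facts combine into a short exact sequence $0\to\{0\}\times\Z\to\widetilde N_{\tilde\sigma}\to N_\sigma\to0$. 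Because $\ker\pi\subseteq\widetilde N_{\tilde\sigma}$, the induced map $\widetilde N/\widetilde N_{\tilde\sigma}\to N/N_\sigma$ is an isomorphism, giving the desired equality and the canonical identification displayed above.

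I expect the surjectivity of $\pi\colon\tilde\sigma\cap\widetilde N\to\sigma\cap N$ to be the only point requiring care. Given $n\in\sigma\cap N$, rationality of $\sigma$ lets me write $n=\sum_{\rho\preceq\sigma}\mu_\rho u_\rho$ with $\mu_\rho\in\Q_{\ge0}$; I then seek a lift $(n,t)\in\tilde\sigma\cap\widetilde N$ of the form $s\,(\underline 0,1)+\sum_\rho\mu_\rho(u_\rho,-a_\rho)$ with $s\ge0$, which forces $t=s-\sum_\rho\mu_\rho a_\rho$. The point is that the tilted generators $(u_\rho,-a_\rho)$ fix the first coordinate to be $n$ while the vertical generator $(\underline 0,1)$ is free to push the last coordinate upward, so choosing $t$ to be any sufficiently large integer makes $s\ge0$ and produces an integral lift inside $\tilde\sigma$. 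This is exactly where the shape of the cone~\eqref{eq.timesD} — one vertical ray together with the graph-like rays $(u_\rho,-a_\rho)$ — is used, and once this lifting is in place the remainder of the argument is formal.
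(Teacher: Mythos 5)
Your proposal is correct and follows essentially the same route as the paper: both reduce the statement to the lattice computation $\widetilde N_{\tilde\sigma}=N_\sigma\times\Z$, deduce $\widetilde N(\tilde\sigma)\cong N(\sigma)$, and conclude via Lemma~\ref{lemma.orbit}. Your explicit lifting argument for the surjectivity of $\pi\colon\tilde\sigma\cap\widetilde N\to\sigma\cap N$ just spells out a step the paper leaves implicit in asserting that projection maps $\tilde\sigma$ onto $\sigma$.
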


\begin{proof}
Following the proof of~\cite[Proposition~7.3.1]{CLS}, set
\[\hat{\sigma}\coloneqq\big\langle (u_\rho,-a_\rho)\big| \rho\preceq\sigma,\,\rho\in\Sigma(1)  \big\rangle\]
and let $\hat{\Sigma}$ be given by the $\hat\sigma$ for all $\sigma\in\Sigma$, yielding a subfan of $\Sigma\times D$. Every cone in $\Sigma\times D$ decomposes uniquely as a Minkowski sum $\hat{\sigma}+(\underline{0},1)$ or $\hat{\sigma}+(\underline{0},0)$ with $\hat{\sigma}\in\hat{\Sigma}$, and projection away from $\R$ induces a bijection $\hat{\Sigma}\to\Sigma$. Letting then $\Sigma_0$ be the fan in $\{\underline{0}\}\times\R$ with the unique maximal cone $(\underline{0},1)$, we have $\Sigma\times D \cong \Sigma \oplus \Sigma_0$
in $N_{\Sigma\times D}\cong N_\Sigma\oplus N_{\Sigma_0}$.

Now $\tilde\sigma$ contains 
$N_{\Sigma_0}$ by construction, and projection away from $\R$ maps $\tilde\sigma$ onto $\sigma$. Hence $(N_{\Sigma\times D})_{\tilde\sigma}
\cong (N_\Sigma)_\sigma\oplus N_{\Sigma_0}$ giving
$N_{\Sigma\times D}(\tilde\sigma)
\cong N_\Sigma(\sigma)$ which, by Lemma~\ref{lemma.orbit}, induces a canonical isomorphism $O(\tilde\sigma)\cong O(\sigma)$. 
\end{proof}

\begin{proposition}\label{prop.divisor}
The subvariety $V((\underline{0},1))$
is the zero section of $X_{\Sigma\times D}$. Writing $i$ for its embedding, $V(\tilde\sigma)=i\,V(\sigma)$ for any $\sigma\in\Sigma$.
\end{proposition}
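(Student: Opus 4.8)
The plan is to argue entirely through the orbit–cone correspondence, combining the orbit closure formula~\eqref{eq.orbitclosure} with the identification $O(\tilde\sigma)\cong O(\sigma)$ from Proposition~\ref{prop.orbit}. Two preliminary observations organize everything. First, the ray $(\underline 0,1)$ is exactly $\tilde\sigma$ for the zero cone $\sigma=\{\underline 0\}$, since $\{\underline 0\}$ has no rays among its faces and so~\eqref{eq.timesD} reduces to $\langle(\underline 0,1)\rangle$; in particular $(\underline 0,1)$ is a cone of $\Sigma\times D$ and $O((\underline 0,1))\cong O(\{\underline 0\})=T_N$. Second, using the two-type decomposition of cones from the proof of Proposition~\ref{prop.orbit}, a cone of $\Sigma\times D$ contains the ray $(\underline 0,1)$ precisely when it is of the form $\tilde\tau=\hat\tau+\langle(\underline 0,1)\rangle$: a cone $\hat\tau$ cannot contain $(\underline 0,1)$ because, reading off first coordinates, a nonnegative relation $\sum_\rho\lambda_\rho(u_\rho,-a_\rho)=(\underline 0,1)$ forces $\sum_\rho\lambda_\rho u_\rho=\underline 0$, hence all $\lambda_\rho=0$ by strong convexity of $\tau$, contradicting the last coordinate.

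For the first assertion I would feed this into~\eqref{eq.orbitclosure} to get
\[
V((\underline 0,1))=\bigcup_{(\underline 0,1)\preceq\tilde\tau'}O(\tilde\tau')=\bigcup_{\tau\in\Sigma}O(\tilde\tau).
\]
The bundle projection $p\colon X_{\Sigma\times D}\to X_\Sigma$ is the toric morphism induced by $\pi$; it carries $O(\tilde\tau)$ to $O(\tau)$ (as $\pi(\tilde\tau)=\tau\in\Sigma$), and on each orbit it realizes precisely the isomorphism of Proposition~\ref{prop.orbit}. Thus $V((\underline 0,1))$, the orbit closure of the ray, has star fan $\Sigma$ and $p$ restricts to an isomorphism $V((\underline 0,1))\xrightarrow{\sim}X_\Sigma$, exhibiting it as a section of $p$. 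To see it is the \emph{zero} section, I would use the fibrewise scaling $\C^*\subset T_{N\times\Z}$ given by the last coordinate: since its generator $(\underline 0,1)$ dies in $N((\underline 0,1))=(N\times\Z)/\langle(\underline 0,1)\rangle\cong N$, by Lemma~\ref{lemma.orbit} this $\C^*$ acts trivially on $O((\underline 0,1))$ and hence fixes $V((\underline 0,1))$ pointwise; as the fixed locus of fibre scaling is exactly the zero section, the claim follows. (Equivalently one checks in the chart $U_{\widetilde{\{\underline 0\}}}=\Spec\C[M][t]$ that $V((\underline 0,1))=\{t=0\}$.)

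For the second assertion the key combinatorial step is the equivalence $\tilde\sigma\preceq\tilde\tau\iff\sigma\preceq\tau$ for $\sigma,\tau\in\Sigma$. One direction applies $\pi$ to $\tilde\sigma\subseteq\tilde\tau$ to obtain $\sigma\subseteq\tau$; conversely, if $\sigma\subseteq\tau$ then every generator of $\tilde\sigma$ is a generator of $\tilde\tau$, so $\tilde\sigma\subseteq\tilde\tau$, and since both are cones of the fan $\Sigma\times D$ this inclusion is a face relation. Since every cone over $\tilde\sigma$ contains $(\underline 0,1)$ and is therefore some $\tilde\tau$, formula~\eqref{eq.orbitclosure} gives
\[
V(\tilde\sigma)=\bigcup_{\sigma\preceq\tau}O(\tilde\tau)\subseteq V((\underline 0,1)).
\]
Transporting through the isomorphism $p|_{V((\underline 0,1))}$ and applying~\eqref{eq.orbitclosure} once more in $X_\Sigma$ yields $p(V(\tilde\sigma))=\bigcup_{\sigma\preceq\tau}O(\tau)=V(\sigma)$, whence $V(\tilde\sigma)=i\,V(\sigma)$.

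I expect the genuine obstacle to be the \emph{zero}-section identification in the first part, not the statement that $V((\underline 0,1))$ is some section: the orbit–cone bookkeeping only produces a section of $p$, and distinguishing the zero section from the other torus-invariant sections requires the extra geometric input of the fibre-scaling fixed-locus argument (or the explicit local chart). The face-relation equivalence underlying the second part is routine once the two-type decomposition of cones is available.
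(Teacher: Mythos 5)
Your proof is correct and follows essentially the same route as the paper: the second claim is obtained exactly as in the paper's proof, by combining the orbit identification of Proposition~\ref{prop.orbit} with the orbit--closure formula~\eqref{eq.orbitclosure}, together with the observation that the cones of $\Sigma\times D$ containing $(\underline{0},1)$ are precisely the $\tilde\tau$. The only difference is that for the zero-section claim the paper simply defers to the proof of \cite[Proposition~7.3.1]{CLS}, whereas you supply a self-contained justification via the fixed locus of the fibre-scaling one-parameter subgroup (equivalently, the local chart computation); both are fine.
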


\begin{proof}
The first claim follows as the ray 
$(\underline{0},1)$
corresponds, by construction, to the fibre coordinate on 
$\mathcal{O}_{X_\Sigma}(D)$, as in the proof of~\cite[Proposition~7.3.1]{CLS}. The identification of Proposition~\ref{prop.orbit} equates disjoint unions
\[
\bigcup_{\tau\in \Sigma} O(\tilde\tau)=\bigcup_{\tau\in \Sigma} i\,O(\tau)
\]
and so \eqref{eq.orbitclosure} allows us to conclude.
\end{proof}

\subsection{Hirzebruch surfaces}\label{sec.hirz}
Recall the following.
\begin{definition}
The Hirzebruch surface $\mathbb{F}_e$ is the projective bundle $\mathbb{P}(\cO_{\mathbb{P}^1}(-e)\oplus\cO_{\mathbb{P}^1})$ on ${\mathbb{P}^1}$, where we take $e \geq 0$. Denote by $C$ the section~$(0:1)$ in $\mathbb{F}_e$.
\end{definition}

The fan of $\mathbb{F}_e$ is as in Figure~\ref{fig.hizfan} with rays $v_l$ for $l=1,\dots,4$~\cite[Example~4.1.8]{CLS}.
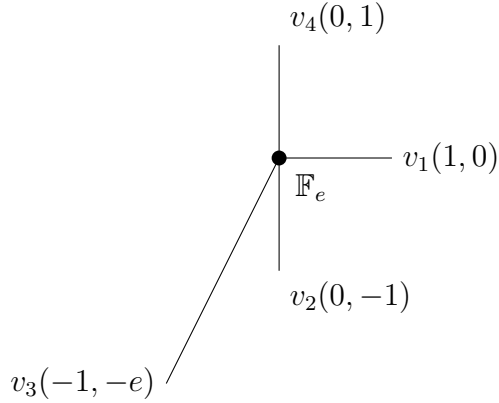
\begin{figure}[!h]
\centering
\begin{tikzpicture}[scale=1.5]
  \coordinate (V0) at (0,0);
  \coordinate (V1) at (1,0);
  \coordinate (V2) at (0,1);
  \coordinate (V3) at (0,-1);
  \coordinate (V4) at (-1,-2);

  \draw (V1) -- (V0); 
  \draw (V2) -- (V0); 
  \draw (V3) -- (V0); 
  \draw (V4) -- (V0); 

  \node[circle,fill,inner sep=1.5pt] at (V0) {};

  \node[right] at (V1) {$v_1(1,0)$};
  \node[above right] at (V2) {$v_4(0,1)$};
  \node[below right] at (V3) {$v_2(0,-1)$};
  \node[left]        at (V4) {$v_3(-1,-e)$};
\end{tikzpicture}  
\caption{The fan of $\mathbb{F}_e$}
\label{fig.hizfan}
\end{figure}

We complete the toric description of $\mathbb{F}_e$ and give intersection numbers, as follows.

\begin{proposition}\label{prop.curvesonhirzebruch}
We have:
\begin{itemize}
\item $V(v_2)=C$ with $C^2=-e$ in $\mathbb{F}_e$.
\item For $l=1,3$, $V(v_l)$ is a fibre of \,$\mathbb{F}_e$, satisfying $V(v_l)^2=0$ and $V(v_l)\cdot C=1$.
\end{itemize}
\end{proposition}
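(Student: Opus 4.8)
The plan is to invoke the intersection theory of smooth complete toric surfaces \cite[Chapter~10]{CLS}, for which the two facts I need are: (i) distinct rays $v,v'$ give $V(v)\cdot V(v')=1$ when $v,v'$ span a two-dimensional cone of the fan, and $V(v)\cdot V(v')=0$ otherwise; and (ii) if $u_{\mathrm{prev}},u,u_{\mathrm{next}}$ are consecutive rays in the cyclic order around the fan, then the unique integer $a$ with $u_{\mathrm{prev}}+u_{\mathrm{next}}=a\,u$ satisfies $V(u)^2=-a$. Both reduce the proposition to elementary lattice arithmetic with the four generators of Figure~\ref{fig.hizfan}.

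First I would record the cyclic order of the rays by argument, namely $v_1=(1,0)$, $v_4=(0,1)$, $v_3=(-1,-e)$, $v_2=(0,-1)$; note this combinatorial arrangement, and hence the list of maximal cones $\langle v_1,v_4\rangle,\langle v_4,v_3\rangle,\langle v_3,v_2\rangle,\langle v_2,v_1\rangle$, is the same for every $e\ge0$, and each such cone has determinant $\pm1$, so that $\mathbb{F}_e$ is smooth. Applying (ii) to $v_2$, whose neighbours are $v_3$ and $v_1$, I compute $v_3+v_1=(-1,-e)+(1,0)=(0,-e)=e\,v_2$, giving $V(v_2)^2=-e$; applying it to $v_1$ and to $v_3$ I get $v_2+v_4=0$ in both cases, so $V(v_1)^2=V(v_3)^2=0$. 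For the remaining intersection numbers, the pairs $\{v_1,v_2\}$ and $\{v_2,v_3\}$ each span a maximal cone, so (i) yields $V(v_1)\cdot V(v_2)=V(v_3)\cdot V(v_2)=1$.

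It then remains to match these toric divisors with the projective-bundle description of Definition~\ref{def.hirz}. For this I would use the toric morphism $\pi\colon\mathbb{F}_e\to\mathbb{P}^1$ induced by the lattice projection $(a,b)\mapsto a$, which is compatible with the two fans and realizes the $\mathbb{P}^1$-bundle structure: the rays $v_2,v_4$ map to $0$, so $V(v_2),V(v_4)$ are sections, while $v_1,v_3$ map to the two rays of $\mathbb{P}^1$, so $V(v_1),V(v_3)$ are the fibres over the torus-fixed points. Since $V(v_2)$ is the section of self-intersection $-e$, it is the negative section $C_0=(0:1)$, as claimed. The computation is routine throughout; the only points demanding care are fixing the cyclic order of the rays correctly---so that neighbours and hence the relations in (ii) are read off accurately---and pinning down the sign convention identifying $V(v_2)$ with $C_0$ rather than with the positive section $V(v_4)$.
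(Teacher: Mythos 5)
Your proposal is correct and follows essentially the same route as the paper: the paper simply declares the intersection numbers standard and, like you, identifies the fibres via the lattice projection onto the first coordinate (citing \cite{CLS}). Your wall-relation computation merely makes explicit the standard toric-surface facts that the paper leaves implicit.
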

\begin{proof} This is standard. The fibre claim is because the bundle projection $\mathbb{F}_e\to{\mathbb{P}^1}$ is induced by 
the projection $N_{\mathbb{F}_e} \to N_{\PP^1}$ onto the first coordinate~\cite[Example~3.3.5 and Theorem~3.3.19]{CLS}.
\end{proof}

\subsection{The $A$-Hilbert scheme}\label{AHilb}
Let $A\subset \SL(3,\C)$ be a finite diagonal cyclic subgroup and write elements of $A$ as $\diag(\xi^{a},\xi^{b},\xi^{c})$ where $\xi$ is a primitive $r^{\text{th}}$ root of unity. The lattice for the fan $\Sigma_{\C^3/A}$ of $\C^3/A$ is given by an overlattice $L\supset\Z^3$ generated by 
\begin{equation*}
    \big\{\tfrac{1}{r}(a,b,c)\big|
    \diag(\xi^{a},\xi^{b},\xi^{c})
    \in A \text{ and }a,b,c\geq 0\big\}.
\end{equation*}

Write $\AHilb(\C^3)$ for the $A$-orbit Hilbert scheme of $\C^3$. It parametrizes $A$-invariant smoothable zero-dimensional subschemes of $\C^3$ of length $ |A|$~\cite{Nakamura}. For convenience, we put $X(A)=\AHilb(\C^3)$.\par

 By \cite{IR,Reid,CR,Craw}, the fan $\Sigma_{X(A)}$ of $X(A)$ is a refinement of $\Sigma_{\C^3/A}$ by inserting additional rays as follows.
 \begin{equation*}
    \Sigma_{X(A)}(1)=\Sigma_{\C^3/A}(1)\:\bigcup\:\big\{\tfrac{1}{r}(a,b,c)\big|
    \diag(\xi^{a},\xi^{b},\xi^{c})
    \in A,\ a+b+c=r\big\}
\end{equation*}
By \cite{Nakamura}, the projective toric resolution 
\begin{equation*}
    f\colon  X(A)\longrightarrow \C^3/A
\end{equation*}
given by this refinement is a projective crepant resolution, thence Calabi--Yau. By \cite[Proposition~11.1.10]{CLS}, the irreducible compact exceptional surfaces in $X(A)$ are given by $V(n)$ where $n\in \Sigma_{X(A)}(1)$ but $n\not\in \Sigma_{\C^3/A}(1)$. 

\section{Exceptional surfaces}\label{sec.cyc}
We now take $X=\AHilb(\C^3)$ with $A=\mu_7$ acting with weights~$(1,2,4)$. As discussed above, we can draw the toric fan $\Sigma$ of $X$ as in Figure~\ref{fig.fan}, and the irreducible compact exceptional surfaces are $V(\rho_l)$ for $l=1,2,3$. By examination, we have
\begin{equation}\label{equation.conerel}
\rho_4 = 2\rho_1 - \rho_2 \qquad
\rho_5 = 2\rho_2 - \rho_3 \qquad
\rho_6 = 2\rho_3 - \rho_1
\end{equation}
and so the overlattice $L$ is generated by $\rho_l$ for $l=1,2,3$.

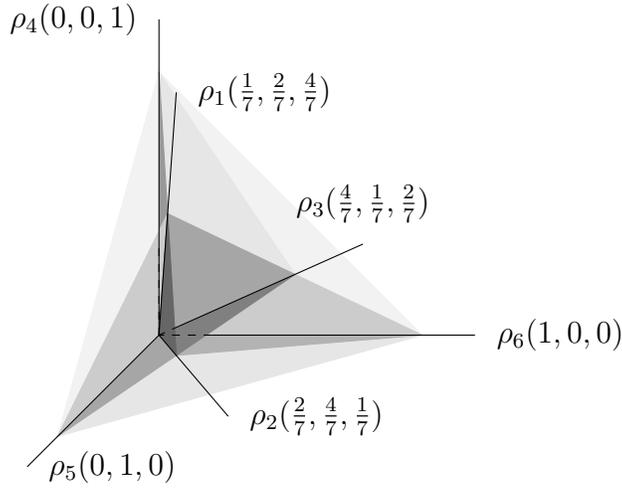
\begin{figure}[h!]
    \begin{center}
\begin{tikzpicture}[scale=3.5]
\coordinate (V1) at (1.2,0,0); 
\coordinate (V2) at (0,1.2,0); 
\coordinate (V3) at (0,0,1.3);  
\coordinate (O) at (0,0,0);
\coordinate (P1) at (1,0,0); 
\coordinate (P3) at (0,1,0); 
\coordinate (P2) at (0,0,1);
\coordinate (O1) at (1/7,4/7,2/7);
\coordinate (O2) at (2/7,1/7,4/7);
\coordinate (O3) at (4/7,2/7,1/7);
\coordinate (O11) at (2/7,8/7,4/7);
\coordinate (O22) at (8/7,4/7,16/7);
\coordinate (O33) at (6/7,3/7,1.5/7);
\coordinate (O333) at (0.6/7,0.3/7,0.15/7);
\coordinate (V33) at (0,0.33,0); 
\coordinate (V11) at (0.2,0,0); 

\filldraw[black!5] (O) -- (P3) -- (P1);
\filldraw[black!5] (O) -- (P3) -- (P2);
\filldraw[black!10] (O) -- (P1) -- (P2);
\filldraw[black!10] (O) -- (O3) -- (P3);
\filldraw[black!20] (O) -- (O3) -- (P1);
\filldraw[black!35] (O) -- (O3) -- (O1);
\filldraw[black!40] (O) -- (O2) -- (P3);
\filldraw[black!20] (O) -- (P2) -- (O1);
\filldraw[black!30] (O) -- (O2) -- (P1);
\filldraw[black!35] (O) -- (O2) -- (P2);
\filldraw[black!50] (O) -- (O2) -- (O3);
\filldraw[black!60] (O) -- (O1) -- (O2);
\draw (V1) -- (V11);
\draw (V2) -- (O);
\draw (V3) -- (O);
\draw[dashed] (V11) -- (O);
\draw[dashed] (V33) -- (O);
\draw[dashed] (O333) -- (O);
\draw (O11) -- (O);
\draw (O22) -- (O);
\draw (O33) -- (O333);

\node[label={0:$\rho_1(\tfrac{1}{7},\tfrac{2}{7},\tfrac{4}{7})$}] at (O11) {};
\node[label={0:$\rho_2(\tfrac{2}{7},\tfrac{4}{7},\tfrac{1}{7})$}] at (O22) {};
\node[label={90:$\rho_3(\tfrac{4}{7},\tfrac{1}{7},\tfrac{2}{7})$}] at (O33) {};
\node[label={180:$\rho_4(0,0,1)$}] at (V2) {};
\node[label={0:$\rho_6(1,0,0)$}] at (V1) {};
\node[label={0:$\rho_5(0,1,0)$}] at (V3) {};

\end{tikzpicture}
\end{center}
    
    \caption{The fan $\Sigma$ of $X$}
    \label{fig.fan}
\end{figure}

Consider the subfan $\Sigma_1$ of $\Sigma$ whose 3-dimensional cones meet~$\rho_1$, as in Figure~\ref{fig.subfan}. Then $U_1\coloneqq X_{\Sigma_1}$ is open in $X$ and $V(\rho_1)\subset U_1$.
\begin{figure}[tb]
    \centering
    \begin{minipage}[b]{0.48\textwidth}
        \centering
        \begin{tikzpicture}[scale=3]
\coordinate (V1) at (1.2,0,0); 
\coordinate (V2) at (0,1.2,0); 
\coordinate (V3) at (0,0,1.3);  
\coordinate (O) at (0,0,0);
\coordinate (P1) at (1,0,0); 
\coordinate (P3) at (0,1,0); 
\coordinate (P2) at (0,0,1);
\coordinate (O1) at (1/7,4/7,2/7);
\coordinate (O2) at (2/7,1/7,4/7);
\coordinate (O3) at (4/7,2/7,1/7);
\coordinate (O11) at (2/7,8/7,4/7);
\coordinate (O22) at (8/7,4/7,16/7);
\coordinate (O33) at (6/7,3/7,1.5/7);
\coordinate (O333) at (0.6/7,0.3/7,0.15/7);
\coordinate (V33) at (0,0.33,0); 
\coordinate (V11) at (0.2,0,0); 

\filldraw[black!5] (O) -- (P3) -- (P2);
\filldraw[black!10] (O) -- (O3) -- (P3);
\filldraw[black!35] (O) -- (O3) -- (O1);
\filldraw[black!40] (O) -- (O2) -- (P3);
\filldraw[black!20] (O) -- (P2) -- (O1);
\filldraw[black!35] (O) -- (O2) -- (P2);
\filldraw[black!50] (O) -- (O2) -- (O3);
\filldraw[black!60] (O) -- (O1) -- (O2);
\draw (V1) -- (V11);
\draw (V2) -- (O);
\draw (V3) -- (O);
\draw[dashed] (V11) -- (O);
\draw[dashed] (V33) -- (O);
\draw[dashed] (O333) -- (O);
\draw (O11) -- (O);
\draw (O22) -- (O);
\draw (O33) -- (O333);

\node[label={0:$\rho_1$}] at (O11) {};
\node[label={0:$\rho_2$}] at (O22) {};
\node[label={90:$\rho_3$}] at (O33) {};
\node[label={180:$\rho_4$}] at (V2) {};
\node[label={0:$\rho_5$}] at (V3) {};

\end{tikzpicture}
        \caption{The subfan $\Sigma_1$}
        \label{fig.subfan}
    \end{minipage}
    \hfill
    \begin{minipage}[b]{0.45\textwidth}
        \centering
        \begin{tikzpicture}[scale=1.5]
            \coordinate (V1) at (0,0);
            \coordinate (V2) at (0,-1);
            \coordinate (V3) at (1,0);
            \coordinate (V4) at (0,1);
            \coordinate (V5) at (-1,-2);

            \draw (V2) -- (V1); 
            \draw (V3) -- (V1); 
            \draw (V4) -- (V1); 
            \draw (V5) -- (V1); 

            \node[circle,fill,inner sep=1.5pt] (Hirz) at (V1) {};

  \node[below right] at (V2) {$\newrays_2(0,-1)$};
  \node[right] at (V3) {$\newrays_3(1,0)$};
  \node[above right]    at (V4) {$\newrays_4(0,1)$};
      \node[left] at (V5) {$\newrays_5(-1,-2)$};
            
        \end{tikzpicture}    
        \caption{The fan $\overline{\Sigma}$}
        \label{fig.hizfanplus}
    \end{minipage}
\end{figure}
Introduce a fan $\overline{\Sigma}$ with one-dimensional cones as follows, as  in Figure~\ref{fig.hizfanplus}.
\begin{equation*}
\newrays_2 = (0,-1) \qquad
\newrays_3 = (1,0) \qquad
\newrays_4 = (0,1) \qquad
\newrays_5 = (-1,-2) 
\end{equation*}
Recalling the description of the Hirzebruch surface from Section~\ref{sec.hirz}, and comparing Figure~\ref{fig.hizfanplus} with Figure~\ref{fig.hizfan} there, we have $
X_{\overline{\Sigma}} \cong \mathbb{F}_2$. Take then the Cartier divisor $D = -\sum_{l=2}^5 V(\newrays_l)$ on $\mathbb{F}_2$ and recall the fan $\overline{\Sigma} \times D$ given in Section~\ref{sect.linebundles}. Then $X_{\overline{\Sigma} \times D}$ is the total space of $\cO_{\mathbb{F}_2}(D)$.

\begin{proposition}\label{prop.hirzopen}
There is a lattice isomorphism $ L \cong N_{\overline{\Sigma} \times D}$ which induces an isomorphism of $U_1$ with the total space of $\cO_{\mathbb{F}_2}(D)$.
\end{proposition}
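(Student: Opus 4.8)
The plan is to realize the claimed isomorphism fan-theoretically: I will produce an explicit lattice isomorphism carrying the fan $\Sigma_1$ onto $\overline{\Sigma}\times D$, and then appeal to the standard fact that a lattice isomorphism identifying two fans induces an isomorphism of the associated toric varieties~\cite{CLS}. First I record the two fans in coordinates. Since $D=-\sum_{i=2}^{5}V(\newrays_i)$ has every coefficient equal to $-1$, the recipe~\eqref{eq.timesD} lifts each ray $\newrays_i$ to height $+1$; thus $N_{\overline{\Sigma}\times D}=\Z^2\times\Z=\Z^3$, and the rays of $\overline{\Sigma}\times D$ are the zero-section ray $(\underline{0},1)=(0,0,1)$ together with the four lifted rays $(\newrays_i,1)$, namely $(0,-1,1)$, $(1,0,1)$, $(0,1,1)$, $(-1,-2,1)$. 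On the other side, the relations~\eqref{equation.conerel} exhibit $\rho_1,\rho_2,\rho_3$ as a $\Z$-basis of $L$, in which the five rays of $\Sigma_1$ become $\rho_1=(1,0,0)$, $\rho_2=(0,1,0)$, $\rho_3=(0,0,1)$, $\rho_4=(2,-1,0)$ and $\rho_5=(0,2,-1)$.

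Next I define a candidate isomorphism $\psi\colon L\to N_{\overline{\Sigma}\times D}$ by prescribing it on the basis $\rho_1,\rho_2,\rho_3$, sending $\rho_1$ to the zero-section ray (so that $V(\rho_1)$ is identified with the zero section $\mathbb{F}_2$):
\[
\psi=\begin{pmatrix} 0 & 0 & 1 \\ 0 & -1 & 0 \\ 1 & 1 & 1 \end{pmatrix},
\qquad \psi\rho_1=(0,0,1),\quad \psi\rho_2=(0,-1,1),\quad \psi\rho_3=(1,0,1).
\]
As $\det\psi=1$, this is a lattice isomorphism. Using the coordinates above, a one-line computation gives $\psi\rho_4=2\psi\rho_1-\psi\rho_2=(0,1,1)$ and $\psi\rho_5=2\psi\rho_2-\psi\rho_3=(-1,-2,1)$, so $\psi$ restricts to a bijection of $\Sigma_1(1)$ onto $(\overline{\Sigma}\times D)(1)$.

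It then remains to match the cone structures, which is the step requiring the most care. The maximal cones of $\overline{\Sigma}\times D$ are the four cones $\tilde{\sigma}$ over the two-dimensional cones of the $\mathbb{F}_2$-fan, each spanned by $(\underline{0},1)$ and two consecutive lifted rays. I therefore need the four maximal three-dimensional cones of $\Sigma_1$, equivalently the local triangulation of the junior simplex at the vertex $\rho_1$, which I read off from Figure~\ref{fig.fan} as $\langle\rho_1,\rho_5,\rho_2\rangle$, $\langle\rho_1,\rho_2,\rho_3\rangle$, $\langle\rho_1,\rho_3,\rho_4\rangle$ and $\langle\rho_1,\rho_4,\rho_5\rangle$. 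Applying $\psi$ to each and comparing with the four $\tilde{\sigma}$ yields the desired bijection on maximal cones, and hence on all cones and their faces.

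The main obstacle is precisely the correct identification of the adjacencies at $\rho_1$. I would pin these down both directly from the fan of Figure~\ref{fig.fan} and, as an independent consistency check, by projecting the rays to $N/\langle\rho_1\rangle$ and verifying that the resulting self-intersection pattern $(-2,0,2,0)$ reproduces the Hirzebruch fan of $\mathbb{F}_2$; this simultaneously identifies $\rho_2$ with the $(-2)$-section and fixes the cyclic order used above. Granting the cone matching, the lattice isomorphism $\psi$ identifies $\Sigma_1$ with $\overline{\Sigma}\times D$ and so induces the required isomorphism $U_1=X_{\Sigma_1}\cong X_{\overline{\Sigma}\times D}=\cO_{\mathbb{F}_2}(D)$.
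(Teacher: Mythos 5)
Your proof is correct and follows essentially the same route as the paper: both define the lattice isomorphism by $\rho_i \mapsto \rho'_i$ for $i=1,2,3$ (your matrix $\psi$ is exactly this map written in the basis $\rho_1,\rho_2,\rho_3$ of $L$), verify $\rho_4,\rho_5$ map correctly via the linear relations, and conclude the identification of fans and hence of toric varieties. The only difference is that you spell out the cone-matching which the paper dismisses as ``by inspection''; your list of maximal cones of $\Sigma_1$ and their correspondence with the cones $\tilde\sigma$ of $\overline{\Sigma}\times D$ is indeed correct.
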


\begin{proof}
 By construction, the fan $\overline{\Sigma} \times D$ has one-dimensional cones as follows. 
\begin{equation*}
\rho'_1 = (0,0,1) \qquad
\rho'_2 = (0,-1,1) \qquad
\rho'_3 = (1,0,1) \qquad
\rho'_4 = (0,1,1) \qquad
\rho'_5 = (-1,-2,1)  \end{equation*}
The $\rho'_l$ for $l=1,2, 3$ generate the lattice $\mathbb{Z}^3$ for $\overline{\Sigma} \times D$. It is then easily checked that 
\begin{equation}
\label{equation.conerelimage}
\rho'_4 = 2\rho'_1 - \rho'_2 \qquad
\rho'_5 = 2\rho'_2 - \rho'_3
\end{equation}
and so, after comparing
\eqref{equation.conerel} and~\eqref{equation.conerelimage}, letting $\rho_l \mapsto \rho'_l$ gives the required lattice isomorphism. By inspection, this induces an isomorphism $
\Sigma_1 \cong \overline{\Sigma} \times D$, and the last part is then standard~\cite[Theorem~3.3.4]{CLS}.
\end{proof}

\begin{remark} The divisor $D$ is the torus-invariant canonical divisor of $\mathbb{F}_2$~\cite[Theorem~8.2.3]{CLS}.
\end{remark}

\begin{proposition}\label{prop.hirz}
    The surface ${S}_l\coloneqq V(\rho_l)\subset X$ for $l=1,2,3$ is isomorphic to $\mathbb{F}_2$.
\end{proposition}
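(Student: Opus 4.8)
The plan is to establish the case $k=1$ directly from the line-bundle description already in hand, and then to deduce the cases $k=2,3$ from the evident cyclic symmetry of the fan.

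For $k=1$ I would argue as follows. By Proposition~\ref{prop.hirzopen} the lattice isomorphism $L \cong N_{\overline{\Sigma}\times D}$ sends $\rho_1 \mapsto \rho'_1 = (0,0,1)$ and identifies $U_1$ with the total space $\cO_{\mathbb{F}_2}(D) = X_{\overline{\Sigma}\times D}$. Now $\rho'_1=(0,0,1)$ is precisely the ray $(\underline{0},1)$ of the line-bundle fan constructed in Section~\ref{sect.linebundles}, so Proposition~\ref{prop.divisor} identifies $V(\rho'_1)$ with the zero section of $\cO_{\mathbb{F}_2}(D)$, which the bundle projection carries isomorphically onto the base $\mathbb{F}_2$. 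Under the isomorphism $U_1\cong\cO_{\mathbb{F}_2}(D)$ the orbit closure $V(\rho_1)$ is thereby identified with $V(\rho'_1)$. Finally, since $V(\rho_1)\subset U_1$ and every cone of $\Sigma$ containing $\rho_1$ already lies in $\Sigma_1$ by construction, formula~\eqref{eq.orbitclosure} shows that the orbit closure $V(\rho_1)$ computed in $X$ coincides with the one computed in $U_1$. Hence $S_1 = V(\rho_1)\cong\mathbb{F}_2$.

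For $k=2,3$ I would exploit the cyclic symmetry manifest in~\eqref{equation.conerel}. Because $\rho_1,\rho_2,\rho_3$ form a basis of $L$, the assignment $\rho_1\mapsto\rho_2$, $\rho_2\mapsto\rho_3$, $\rho_3\mapsto\rho_1$ extends to a lattice automorphism $\phi$ of $L$. Applying $\phi$ to the three relations in~\eqref{equation.conerel} yields $\rho_4\mapsto\rho_5$, $\rho_5\mapsto\rho_6$, $\rho_6\mapsto\rho_4$, so $\phi$ permutes the six rays of $\Sigma$ cyclically. Checking that $\phi$ also respects the maximal cones of $\Sigma$ (as recorded in Figure~\ref{fig.fan}) shows that $\phi$ is an automorphism of the fan, hence induces a toric automorphism of $X$ that carries $S_1$ to $S_2$ and $S_2$ to $S_3$. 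Consequently $S_2\cong S_3\cong S_1\cong\mathbb{F}_2$.

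The only genuine work lies in this symmetry step: one must confirm that $\phi$ sends three-dimensional cones of $\Sigma$ to three-dimensional cones, that is, that the triangulation of the junior simplex is itself $\Z/3$-symmetric. This is immediate from the cyclic form of~\eqref{equation.conerel} once the list of maximal cones is written out, but it is the one point where the combinatorics of Figure~\ref{fig.fan} must be used rather than just the three generating relations. An alternative that avoids any global symmetry claim is to repeat the $k=1$ argument verbatim: construct subfans $\Sigma_2,\Sigma_3$ consisting of the cones meeting $\rho_2,\rho_3$, together with lattice isomorphisms to $N_{\overline{\Sigma}\times D}$ exactly as in Proposition~\ref{prop.hirzopen}, the required relabelings being the cyclic images of~\eqref{equation.conerelimage}.
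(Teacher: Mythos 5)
Your proof is correct and follows essentially the same route as the paper's: reduce to $k=1$ by the cyclic symmetry of the fan, then identify $V(\rho_1)$ with the zero section of $\cO_{\mathbb{F}_2}(D)$ via Propositions~\ref{prop.divisor} and~\ref{prop.hirzopen}. The only differences are that you spell out the symmetry automorphism and the agreement of the orbit closure computed in $U_1$ versus $X$, both of which the paper leaves implicit.
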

\begin{proof}
By rotational symmetry, we only need to consider $l=1$. By Proposition~\ref{prop.divisor}, $V(\rho'_1)$ is the zero section of $\cO_{\mathbb{F}_2}(D)$, and the result follows via the isomorphism of Proposition~\ref{prop.hirzopen}.
\end{proof}

Write $C_l$ for the section~$(0:1)$ in ${S}_l$. Recall that $C_l\cong\mathbb{P}^1$.

\begin{proposition}\label{prop.2inter}
The curve $C_l$ is a fibre of ${S}_{l+1}$, and we have:
\begin{itemize}
\item $C_l^2=-2$ in ${S}_l$.
\item $C_l^2=0$ in ${S}_{l+1}$.
\end{itemize}
Moreover, the intersection $\bigcap_{\,l=1}^{\,3} {S}_l=\{p\}$ for a point $p\in X$.
\end{proposition}
\begin{proof}
By symmetry, we may prove the claim for $S_1$. Using~\cite[Theorem~3.2.6(d)]{CLS}
\[
S_1 \cap S_2
= V(\rho_1)\cap V(\rho_2)
= V(\langle\rho_1,\rho_2\rangle)
\cong V(\langle\rho'_1,\rho'_2\rangle)
= i\,V(\tau_2)
\]
where the isomorphism is via
Proposition~\ref{prop.hirzopen}, and the final equality follows from the embedding
\(i\colon S_1 \to U_1 \subset X\) in Proposition~\ref{prop.divisor} and the fact
that \(\tilde\tau_2=\langle\rho'_1,\rho'_2\rangle\) by the construction of
\(\overline{\Sigma}\times D\). Hence \(S_1 \cap S_2=C_1\) since \(V(\tau_2)\) is the section
 \((0\!:\!1)\) in \(X_{\overline{\Sigma}}\cong \F_2\) by Proposition~\ref{prop.curvesonhirzebruch}, and furthermore \(C_1^2=-2\) in~\(S_1\). By symmetry, \(S_l\cap S_{l+1}=C_l\) and so similarly to the above we find $C_3
= i\,V(\tau_3)$ so that $C_3$ is a fibre of $S_1$ and \(C_3^2=0\) in~\(S_1\) as required.

     For the last part, $\bigcap_{\,l=1}^{\,3} {S}_l=\bigcap_{\,l=1}^{\,3} V(\rho_l)=V(\sigma)$ where  $\sigma=\langle\rho_1,\rho_2,\rho_3\rangle$. This is $\{p\}$ for $p\in X$ because $L$ is generated by $\rho_1,\rho_2,\rho_3$.
 \end{proof}

\section{Proof}\label{sec.proof}

As in Section~\ref{sec.cyc}, we take $X=\AHilb(\C^3)$ where $A=\mu_7$ acts with weights~$(1,2,4)$. By Proposition~\ref{prop.hirz}, all three exceptional surfaces ${S}_l$ in $X$ are isomorphic to $\mathbb{F}_2$. Let $i_l\colon  {S}_l\to X$ be the closed embedding, and set $\mathcal{E}_l\coloneqq i_{l*}\mathcal{O}_{{S}_l}$.

\begin{proposition}
    Each $\mathcal{E}_l$ is spherical and induces an autoequivalence $T_l$ of $D(X)$.
\end{proposition}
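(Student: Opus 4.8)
The plan is to verify the two defining conditions of Definition~\ref{definition.sph} with $d=3$. By the rotational symmetry of Figure~\ref{fig.fan} it suffices to treat $k=1$; I write $S = S_1$, $i = i_1$, $\cE = \cE_1$, and $N$ for the normal bundle $N_{S/X}$. The condition $\cE \otimes \omega_X \cong \cE$ is immediate, since $X$ is a crepant resolution of $\C^3/\mu_7$ and is therefore Calabi--Yau with $\omega_X \cong \cO_X$, so that $\cE \otimes \omega_X \cong \cE \otimes \cO_X \cong \cE$. The content of the proposition is thus the computation of $\dHomk(\cE,\cE)$, which I claim equals $\C \oplus \C[-3]$.

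For this I would push the computation from $X$ down to the surface $S \cong \mathbb{F}_2$. As $S$ is a smooth divisor in the smooth variety $X$, the Koszul resolution yields the exact triangle
\[
\cO_X(-S) \longrightarrow \cO_X \longrightarrow \cE \xrightarrow{\ +1\ }.
\]
Applying $\mathbf{R}\Hom_X(-,\cE)$ and using $\mathbf{R}\Hom_X(\cO_X,\cE) \cong \mathbf{R}\Gamma(S,\cO_S)$ together with $\mathbf{R}\Hom_X(\cO_X(-S),\cE) \cong \mathbf{R}\Gamma(S,\cO_X(S)|_S) = \mathbf{R}\Gamma(S,N)$ produces a triangle
\[
\mathbf{R}\Hom_X(\cE,\cE) \longrightarrow \mathbf{R}\Gamma(S,\cO_S) \longrightarrow \mathbf{R}\Gamma(S,N) \xrightarrow{\ +1\ },
\]
and hence a long exact sequence relating $\Ext^\bullet_X(\cE,\cE)$ to $H^\bullet(S,\cO_S)$ and $H^\bullet(S,N)$.

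It remains to compute the cohomology of these two line bundles on $\mathbb{F}_2$. Since $\mathbb{F}_2$ is rational, $H^0(S,\cO_S) = \C$ and $H^1(S,\cO_S) = H^2(S,\cO_S) = 0$. For the normal bundle I would use the adjunction formula $\omega_S \cong \omega_X|_S \otimes N$; the Calabi--Yau condition $\omega_X \cong \cO_X$ then gives $N \cong \omega_S$, whence Serre duality on the surface $S$ yields $H^i(S,N) \cong H^{2-i}(S,\cO_S)^\vee$, so that $H^2(S,N) = \C$ and $H^0(S,N) = H^1(S,N) = 0$. As $H^\bullet(S,\cO_S)$ is concentrated in degree $0$ and $H^\bullet(S,N)$ in degree $2$, the map $\mathbf{R}\Gamma(S,\cO_S) \to \mathbf{R}\Gamma(S,N)$ induces zero on cohomology, and the long exact sequence gives $\Ext^0_X(\cE,\cE) = \C$, $\Ext^1_X(\cE,\cE) = \Ext^2_X(\cE,\cE) = 0$ and $\Ext^3_X(\cE,\cE) = \C$, i.e.\ $\dHomk(\cE,\cE) \cong \C \oplus \C[-3]$. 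This proves $\cE$ spherical, and the existence of the exact autoequivalence $T_k \coloneqq T_{\cE_k}$ follows at once from the general theory recalled after Definition~\ref{definition.sph}.

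I expect the only genuine subtlety to be the computation of $H^\bullet(S,N)$. Rather than extracting the normal bundle of $\mathbb{F}_2$ inside $X$ directly from the toric data of Figure~\ref{fig.fan}, the Calabi--Yau adjunction identification $N \cong \omega_S$ sidesteps this and reduces everything to the standard cohomology of $\cO_{\mathbb{F}_2}$ via Serre duality.
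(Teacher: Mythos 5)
Your argument is correct, and it is in essence the paper's argument with the black box opened. The paper's proof consists of checking three hypotheses --- $\cO_{S}$ is exceptional on the rational surface $S$, the embedding $i$ is a complete connected hypersurface in the smooth quasi-projective $X$, and $i^*\omega_X$ is trivial by the Calabi--Yau property --- and then citing \cite[Proposition~3.15]{ST}, which states precisely that the pushforward of an exceptional object along such an embedding is spherical. Your Koszul-resolution computation, with the identification $N_{S/X}\cong\omega_S$ via crepant adjunction and Serre duality on $S$, is exactly the proof of that proposition specialized to $\cF=\cO_S$, and the inputs you use ($H^{>0}(S,\cO_S)=0$ for the rational surface $\mathbb{F}_2$, triviality of $\omega_X$) are the same ones the paper verifies before invoking the citation. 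What the explicit route buys is self-containedness and a visibly correct degree bookkeeping; what the citation buys is that \cite{ST} also handles the point you gloss over at the end, namely that $X$ is only quasi-projective, so the statement that $T_{\cE_k}$ is an autoequivalence should not be taken verbatim from \cite[Proposition~8.6]{Huy} (stated for smooth projective $X$) but rather from the Seidel--Thomas treatment of compactly supported spherical objects on quasi-projective varieties --- the same caveat the paper itself flags later in the proof of Theorem~\ref{thm.main}. This is a presentational rather than a mathematical gap.
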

\begin{proof}
     Since ${S}={S}_l$ is rational and connected, $\mathcal{O}_{{S}}$ is an exceptional object in $D({S})$. The variety $X$ is smooth, $i=i_l$ is the embedding of a complete connected hypersurface, and $i^* \omega_{X}$ is trivial because $X$ is Calabi--Yau. The claim then follows by \cite[Proposition~3.15 and remarks above]{ST} which notes that $X$ quasi-projective suffices. 
\end{proof}

\begin{proposition}\label{prop.2-chain}
    The $\{\mathcal{E}_l,\mathcal{E}_m\}$ are $A_2$-configurations for each pair $\{l,m\}\subset\{1,2,3\}$. In addition, $\dHomk_X(\cE_l,\cE_{l+1})\cong\C[-1]$ for $l=1,2,3$ with indices taken modulo $3$.
\end{proposition}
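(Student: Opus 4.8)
The plan is to compute the graded Hom spaces directly from the sheaf-level formula of Proposition~\ref{prop.extgen}, combined with the intersection data recorded in Section~\ref{sec.cyc}. Since the $A_2$-configuration condition for a pair of spherical objects amounts to the single requirement that their graded Hom space be one-dimensional, and since each pair $\{{S}_k,{S}_l\}$ meets in a $\PP^1$, the statement reduces to a degree count on these curves. By the rotational symmetry cycling ${S}_1\to{S}_2\to{S}_3$, it suffices to treat one adjacent ordered pair, say $\cE_1\to\cE_2$.

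First I would apply Proposition~\ref{prop.extgen} with $D_1={S}_1$, $D_2={S}_2$, $C=C_{12}$, and $\cF=\cO_{{S}_2}$. As $\cO_{{S}_2}$ is the tensor unit, this directly yields
\begin{equation*}
\mathcal{H}om_X(\cE_1,\cE_2)\cong {i_2}_*\,{j_{12}}_*\,\cO_{C_{12}}({S}_1)[-1],
\end{equation*}
the pushforward of a line bundle on $C_{12}\cong\PP^1$ placed in cohomological degree~$1$. Next I would identify this line bundle using Lemma~\ref{lemma.restocurve}, which gives $\cO_{C_{12}}({S}_1)\cong\cO_{C_{12}}(c_2)$, where $c_2=C_{12}\cdot C_{12}$ is the self-intersection of $C_{12}$ in ${S}_2$. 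By Proposition~\ref{prop.2inter} the curve $C_{12}$ is a fibre of ${S}_2\cong\mathbb{F}_2$, so Proposition~\ref{prop.curvesonhirzebruch} gives $c_2=0$ and hence $\cO_{C_{12}}({S}_1)\cong\cO_{\PP^1}$. Applying $R\Gamma$, and using that pushforward along the closed immersions is exact on cohomology, I obtain
\begin{equation*}
\dHomk_X(\cE_1,\cE_2)\cong R\Gamma(\PP^1,\cO_{\PP^1})[-1]\cong\C[-1],
\end{equation*}
since $H^1(\PP^1,\cO_{\PP^1})=0$. This establishes the grading claim and in particular that $\dim_\C\dHomk_X(\cE_1,\cE_2)=1$.

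It remains to deduce the $A_2$-configuration statement for every pair. By rotational symmetry the same computation applies to the pairs $\{\cE_2,\cE_3\}$ and $\{\cE_3,\cE_1\}$, and since there is no pair at cyclic distance $\geq 2$, the one-dimensionality just established is exactly the $A_2$-configuration condition of Definition~\ref{def.con}. The reversed orderings carry the same total dimension by Calabi--Yau Serre duality (explicitly, $\dHomk_X(\cE_{k+1},\cE_k)\cong\C[-2]$); alternatively one may recompute them directly, where the relevant self-intersection is now that of the section $C_0$ in $\mathbb{F}_2$, namely $-2$, giving $\cO_{\PP^1}(-2)$ with $R\Gamma(\PP^1,\cO_{\PP^1}(-2))\cong\C[-1]$ and hence again a one-dimensional graded Hom space. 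The only point requiring care is tracking which of $C_{k,k+1}$ is the section and which is the fibre in each surface; this bookkeeping is governed entirely by Proposition~\ref{prop.2inter}, and it is where the asymmetry between the degrees $0$ and $-2$ (and thus the placement of $\C$ in degree $1$ versus degree $2$) originates.
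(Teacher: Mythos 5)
Your proof is correct and follows essentially the same route as the paper's: apply Proposition~\ref{prop.extgen}, identify $\cO_{C_{12}}({S}_1)$ as trivial via Lemma~\ref{lemma.restocurve} and the vanishing self-intersection of the fibre $C_{12}$ in ${S}_2$, take global sections, and dispatch the remaining pairs by rotational symmetry and Serre duality. The additional direct check of the reversed ordering (yielding $\C[-2]$) is consistent and harmless, though the paper handles this by Serre duality alone.
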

\begin{proof}
By rotational symmetry and Serre duality, which applies to the compactly-supported objects $\cE_l$, it is enough to show $\dHomk_X(\cE_1,\cE_2)\cong\C[-1]$.
    By Proposition~\ref{prop.extgen}
    \begin{equation*}
        \mathcal{H}om_{X}(\cE_1,\cE_2) \cong  k_{1*}  \cO_{C_{1}}({S}_1)[-1]
\end{equation*}
where $k_{1}$ denotes the embedding of $C_{1}$ in $X$. Now $\cO_{C_{1}}({S}_1)$ is trivial using Lemma~\ref{lemma.restocurve}, noting that $C_{1}^2=0$ in ${S}_2$ by Proposition~\ref{prop.2inter}. Taking global sections, we conclude.
\end{proof}

By construction, ${S}_2$ is a $\PP^1$-bundle over $C_{2}$. Write $C_{12}$ for the fibre over a point $q \in C_{2} \setminus \{p\}$, as shown in Figure~\ref{fig.rel}.

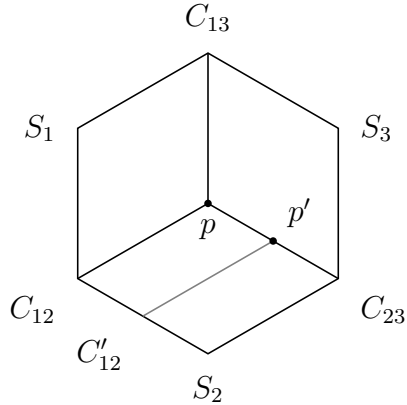
\begin{figure}[h!]
    \centering
    \begin{tikzpicture}[scale=2,semithick,
dot/.style = {circle, fill=black, inner sep=1pt}]
        \coordinate (O) at (0,0);
        \coordinate (V2) at (90:1); 
\coordinate (V3) at (210:1); 
\coordinate (V1) at (330:1);
\coordinate (S2) at (-90:1); 
\coordinate (S1) at (150:1); 
\coordinate (S3) at (30:1);
\coordinate (V21) at ($ (V3) !1/2! (S2) $);
\coordinate (V22) at ($ (O) !1/2! (V1) $);

\draw (S3) -- (V2) -- (S1) -- (V3) -- (S2) -- (V1) -- cycle; 
\draw (V1) -- (O);
\draw (V2) -- (O);
\draw (V3) -- (O);
\draw[black!50] (V21) -- (V22);

\node[label={180:${S}_1$}] at (S1) {};
\node[label={270:${S}_2$}] at (S2) {};
\node[label={0:${S}_3$}] at (S3) {};
\node[label={-30:$C_{2}$}] at (V1) {};
\node[label={210:$C_{1}$}] at (V3) {};
\node[label={230:$C_{12}$}] at (V21) {};
\node[label={90:$C_{3}$}] at (V2) {};
\node[dot,label={270:$p$}] at (O) {};
\node[dot,label={30:$q$}] at (V22) {};
    \end{tikzpicture}
    \caption{The fibre $C_{12}$ in ${S}_2$}
    \label{fig.rel}
\end{figure}

\begin{proposition}\label{prop.eq}
     $T_1\cE_2$ is isomorphic to the pushforward of $\cO_{{S}_1 \cup {S}_2}(C_{12})$ to $X$. 
\end{proposition}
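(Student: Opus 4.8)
The plan is to apply Lemma~\ref{lemma.exten}(2). By Proposition~\ref{prop.2-chain} we have $\dHomk_X(\cE_1,\cE_2)\cong\C[-1]$, so we are in the case $r=1$, and $T_1\cE_2$ is characterised, up to isomorphism, as the unique non-split extension fitting into a triangle $\cE_2\To T_1\cE_2\To\cE_1\To$; uniqueness holds because $\Hom_{D(X)}(\cE_1,\cE_2[1])=\C$ is one-dimensional. It therefore suffices to exhibit ${i_{12}}_*\cO_{{S}_{12}}(C_{12}')$ as a non-split extension of $\cE_1$ by $\cE_2$.

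To construct the extension I would begin from the structure sequence of the reducible surface ${S}_{12}={S}_1\cup{S}_2$. The ideal sheaf of ${S}_1$ in ${S}_{12}$ is supported on ${S}_2$ and cut out by $C_{12}={S}_1\cap{S}_2$, so it is $\cO_{{S}_2}(-C_{12})$, giving
\[
0\To\cO_{{S}_2}(-C_{12})\To\cO_{{S}_{12}}\To\cO_{{S}_1}\To0.
\]
Twisting by the line bundle $\cO_{{S}_{12}}(C_{12}')$ and identifying the two outer terms by restriction to the components is the heart of the argument. Since $C_{12}'$ is a fibre of ${S}_2\to C_{23}$ over $p'\neq p$, it is disjoint from $C_{12}$ and hence from ${S}_1$, so $\cO_{{S}_{12}}(C_{12}')|_{{S}_1}\cong\cO_{{S}_1}$ and the right-hand term is unchanged. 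On ${S}_2$ the divisors $C_{12}'$ and $C_{12}$ are both fibres, hence linearly equivalent, so the left-hand term becomes $\cO_{{S}_2}(-C_{12})\otimes\cO_{{S}_2}(C_{12}')\cong\cO_{{S}_2}(C_{12}'-C_{12})\cong\cO_{{S}_2}$. The twisted sequence thus reads
\[
0\To\cO_{{S}_2}\To\cO_{{S}_{12}}(C_{12}')\To\cO_{{S}_1}\To0,
\]
and pushing forward along the closed (hence exact) embedding $i_{12}$ yields an exact triangle $\cE_2\To{i_{12}}_*\cO_{{S}_{12}}(C_{12}')\To\cE_1\To$ of exactly the shape demanded by Lemma~\ref{lemma.exten}(2).

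It remains to verify non-splitness. Here I would use that ${S}_{12}$ is connected, reduced and proper, so $H^0({S}_{12},\cO_{{S}_{12}})=\C$; consequently the line bundle $\cO_{{S}_{12}}(C_{12}')$ has endomorphism ring $\C$, is therefore indecomposable, and the same holds for its pushforward to $X$. As $\cE_1\oplus\cE_2$ is decomposable, the triangle cannot split, and Lemma~\ref{lemma.exten}(2) then identifies ${i_{12}}_*\cO_{{S}_{12}}(C_{12}')$ with $T_1\cE_2$.

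The main obstacle is the middle step: handling line bundles and Cartier divisors on the non-normal surface ${S}_{12}$, in particular checking that $C_{12}'$ is Cartier there (it lies away from the singular curve $C_{12}$), that the two restrictions are as stated, and that the linear equivalence $C_{12}'\sim C_{12}$ on ${S}_2$ is precisely what trades the twist for a clean copy of $\cE_2$. By contrast, once connectedness of ${S}_{12}$ is in place the non-splitness is essentially formal.
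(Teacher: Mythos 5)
Your proof is correct and follows essentially the same route as the paper: both start from the structure sequence $0\to\cO_{{S}_2}(-C_{12})\to\cO_{{S}_{12}}\to\cO_{{S}_1}\to0$, twist by $\cO_{{S}_{12}}(C_{12}')$, use the linear equivalence $C_{12}'\sim C_{12}$ on ${S}_2$ together with $C_{12}'\cap {S}_1=\emptyset$ to identify the outer terms as $\cO_{{S}_2}$ and $\cO_{{S}_1}$, and then conclude via Lemma~\ref{lemma.exten}(2). The only cosmetic difference is in the non-splitness step, where the paper deduces it from connectedness of ${S}_{12}$ (non-emptiness of ${S}_1\cap{S}_2$) before twisting, while you apply the equivalent indecomposability argument to the twisted line bundle; both are fine.
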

\begin{proof}
There is a short exact sequence
\begin{equation*}
    0\longrightarrow \mathcal{I}_{C_{1}|{S}_2}\longrightarrow\mathcal{O}_{{S}_1\cup{S}_2}\longrightarrow\mathcal{O}_{{S}_1}\longrightarrow 0
\end{equation*}
on ${S}_1\cup{S}_2$, which is non-split as ${S}_1\cap{S}_2=C_{1}$ is non-empty. Now $\mathcal{I}_{C_{1}|{S}_2}\cong \mathcal{I}_{{C_{12}}|{S}_2}$ as $C_{1}$ and $C_{12}$ are linearly equivalent. Tensoring by $\cO_{{S}_1\cup{S}_2}(C_{12})$ we get the exact sequence
\begin{equation*}
    0\longrightarrow \mathcal{O}_{{S}_2}\longrightarrow\cO_{{S}_1\cup{S}_2}(C_{12})\longrightarrow\mathcal{O}_{{S}_1}\longrightarrow 0
\end{equation*}
after noting that ${S}_1\cap C_{12}=\emptyset$. Pushforward then gives a non-split short exact sequence on~$X$ which, using Proposition~\ref{prop.2-chain} and Lemma~\ref{lemma.exten}(2), proves the claim.   
\end{proof}

\begin{proposition}\label{prop.ext1}
$T_1\cE_2$ lies in the right orthogonal of $\cE_3$, i.e.~ $T_1\cE_2\in\cE_3^\perp$.
\end{proposition}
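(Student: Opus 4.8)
The aim is to prove $\dHomk_X(\cE_3,T_1\cE_2)=0$. The plan is to exploit the explicit model $T_1\cE_2\cong{i_{12}}_*\cO_{{S}_{12}}(C_{12}')$ from Proposition~\ref{prop.eq} and to split $\cO_{{S}_{12}}(C_{12}')$ into pieces supported on the smooth components ${S}_1$, ${S}_2$ and their intersection $C_{12}$. Concretely, I would take the standard Mayer--Vietoris sequence $0\to\cO_{{S}_{12}}\to\cO_{{S}_1}\oplus\cO_{{S}_2}\to\cO_{C_{12}}\to0$ for the reduced union ${S}_{12}={S}_1\cup{S}_2$ and tensor it by the line bundle $\cO_{{S}_{12}}(C_{12}')$. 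Since $C_{12}'$ is a fibre disjoint from both ${S}_1$ and $C_{12}$ (as in Figure~\ref{fig.rel}), its restriction to ${S}_1$ and to $C_{12}$ is trivial, so after pushing forward to $X$ this yields a short exact sequence $0\to T_1\cE_2\to\cE_1\oplus{i_2}_*\cO_{{S}_2}(C_{12}')\to\cO_{C_{12}}\to0$. Applying $\dHomk_X(\cE_3,-)$ reduces the statement to computing the three outer $\Hom$ complexes together with one comparison map.

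The three computations all follow from Proposition~\ref{prop.extgen} combined with the intersection numbers of Proposition~\ref{prop.curvesonhirzebruch}. First, $\dHomk_X(\cE_3,\cE_1)\cong\C[-1]$ directly by Proposition~\ref{prop.2-chain}. Second, $\dHomk_X(\cE_3,{i_2}_*\cO_{{S}_2}(C_{12}'))=0$: here Proposition~\ref{prop.extgen} expresses the answer through the line bundle $\cO_{C_{23}}({S}_3)\otimes\cO_{{S}_2}(C_{12}')|_{C_{23}}$ on $C_{23}\cong\PP^1$, whose degree is $C_{23}^2+C_{12}'\cdot C_{23}=-2+1=-1$, and which therefore has no cohomology. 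Third, $\dHomk_X(\cE_3,\cO_{C_{12}})\cong\C[-1]$: applying Proposition~\ref{prop.extgen} on ${S}_1$, the relevant object is $\cO_{C_{13}}({S}_3)\otimes\cO_{C_{12}}$ with trivial twist (since $C_{13}^2=0$ in ${S}_1$), and as $C_{13}$ is a fibre and $C_{12}$ a section of ${S}_1$ meeting transversally at the triple point $p$, this derived tensor product is the skyscraper $\cO_p$.

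Because the middle summand vanishes, the triangle collapses to $\dHomk_X(\cE_3,T_1\cE_2)\to\C[-1]\xrightarrow{\psi}\C[-1]$, and it remains to show that $\psi$, induced by the restriction $\cO_{{S}_1}\to\cO_{C_{12}}$, is an isomorphism. By the functoriality of Proposition~\ref{prop.extgen}, I would identify $\psi$ with the evaluation-at-$p$ map $\Gamma(C_{13},\cO_{C_{13}})\to\C$: the generator of $\dHomk_X(\cE_3,\cE_1)$ corresponds to the constant section of $\cO_{C_{13}}$, whose value at $p\in C_{13}$ is nonzero. Hence $\psi$ is nonzero, so an isomorphism of one-dimensional spaces, and the triangle forces $\dHomk_X(\cE_3,T_1\cE_2)=0$, i.e.~$T_1\cE_2\in\cE_3^\perp$.

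I expect this last step to be the main obstacle. Indeed, if one instead applies $\dHomk_X(\cE_3,-)$ directly to the defining sequence $0\to\cE_2\to T_1\cE_2\to\cE_1\to0$ of Proposition~\ref{prop.eq}, the vanishing reduces to showing that the single connecting map $\Hom_X(\cE_3,\cE_1[1])\to\Hom_X(\cE_3,\cE_2[2])$ is an isomorphism; this map is Yoneda composition with the extension class running around the $3$-cycle, and its nonvanishing is the one genuinely nondegenerate point of the argument. The virtue of the Mayer--Vietoris refinement is that it localizes precisely this question to evaluation at the triple point $p$, where the transverse intersection $C_{13}\cap C_{12}=\{p\}$ renders it transparent.
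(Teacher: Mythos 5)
Your proof is correct, but it takes a genuinely different route from the paper's. The paper applies Proposition~\ref{prop.extgen} once, with $D_1={S}_3$ and $D_2={S}_{12}$, reducing the claim to the vanishing of all cohomology of a single line bundle $\cL$ on the nodal curve $C_3=C_{13}\cup C_{23}$; it then observes that $\cL$ is trivial on the branch $C_{13}$, hence pulled back from $\cO_{C_{23}}(-1)$ along the contraction of that branch, and so has no cohomology. You instead decompose $T_1\cE_2\cong {i_{12}}_*\cO_{{S}_{12}}(C_{12}')$ by Mayer--Vietoris into pieces supported on the smooth surfaces ${S}_1$, ${S}_2$ and on $C_{12}$, compute the three outer $\Hom$ complexes separately (using the same intersection-theoretic inputs as the paper: $C_{23}^2=-2$ in ${S}_2$, $C_{13}^2=0$ in ${S}_1$, and the fact that a fibre meets a section once), and then must check that the comparison map $\psi$ between the two surviving copies of $\C[-1]$ is nonzero. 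That last step is the price of your decomposition: the paper's formulation absorbs the corresponding nondegeneracy into the statement that $\cL\cong\pi^*(\cL|_{C_{23}})$ with $R\pi_*\cO_{C_3}\cong\cO_{C_{23}}$, which is in essence the same assertion that evaluation at the node is an isomorphism on $H^0(\cO_{C_{13}})$, so nothing further needs verifying there. Your identification of $\psi$ with evaluation at the triple point $p$ --- via naturality of Proposition~\ref{prop.extgen} in its second argument and Tor-independence of the transverse curves $C_{13}$ and $C_{12}$ in ${S}_1$ --- is sound and does complete the argument, though it should be spelled out rather than asserted. What your route buys in exchange is that Proposition~\ref{prop.extgen} is only ever applied with $D_2$ a smooth surface, so you avoid working with the reducible divisor ${S}_{12}$ and line bundles on the nodal curve $C_3$ altogether.
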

\begin{proof}
Write $i$ for the  embedding  of ${S}_1 \cup {S}_2$ in $X$. By Proposition~\ref{prop.eq}  we have 
    \begin{align*}
    \mathcal{H}om_X(\mathcal{E}_3, T_1\cE_2)
        &\cong\mathcal{H}om_X(i_{3*}\cO_{{S}_3},  i_{*}\cO_{{S}_1 \cup {S}_2}(C_{12}))
    \end{align*}
and by Proposition~\ref{prop.extgen} this is $k_{*} \cL[-1]$ where $\cL=\mathcal{O}_{C}({S}_3)\otimes \cO_{C}(C_{12})$ for $C = C_{2} \cup C_{3}$ and $k$ is the embedding of $C$ in $X$. It thence suffices to show that $\cL$ has no cohomology.

  We have
$  \cL|_{C_{3}}
    \cong \mathcal{O}_{C_{3}}({S}_3)
    $ because $C_{3}\cap C_{12}=\emptyset$. But this is trivial
using Lemma~\ref{lemma.restocurve}, noting that $C_{3}^2=0$ in ${S}_1$ by Proposition~\ref{prop.2inter}. Writing $\pi\colon C\to C_{2}$ for the contraction of the branch $C_{3}$ to~$p$, we thence have $\cL\cong \pi^*(\cL|_{C_{2}})$.

Recall that ${S}_2$ is a $\PP^1$-bundle over $C_{2}$ and $C_{12}$ is the fibre over $q\in C_{2}$,  hence
\begin{equation*}
    \cL|_{C_{2}}
    \cong \cO_{C_{2}}({S}_3)\otimes \cO_{C_{2}}(q)
    \cong \cO_{C_{2}}(-2)\otimes\cO_{C_{2}}(1)
    \cong \cO_{C_{2}}(-1)
\end{equation*}
where we again use Lemma~\ref{lemma.restocurve}, noting that $C_{2}^2=-2$ in ${S}_2$ by Proposition~\ref{prop.2inter}. This has no cohomology, so using the projection formula and $\pi_*\cO_{C}\cong\cO_{C_{2}}$ we are done.
\end{proof}

\begin{definition}[\hspace*{-0.45em}\cite{GM,Qiu}]
A group $G$ associated with the quiver with potential $(Q,W=cba)$ in Figure~\ref{fig.quiver} is defined as follows.
    \begin{equation*}
    G=\langle\, g_1,g_2,g_3 \,|\,                g_1g_2g_3g_1=g_2g_3g_1g_2,\,
                g_ig_jg_i=g_jg_ig_j\rangle
\end{equation*}
\end{definition}

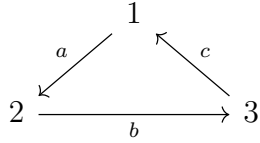
\begin{figure}[h!]
\begin{center}
\begin{tikzcd}
	& 1 \\
	2 && 3
	\arrow[from=1-2, to=2-1,"a"']
	\arrow[from=2-1, to=2-3,"b"']
	\arrow[from=2-3, to=1-2,"c"']
\end{tikzcd}
\end{center}

\caption{Quiver $(Q,W=cba)$}

\label{fig.quiver}
\end{figure}

\begin{theorem}\label{thm.main}
 The group $G$ above acts faithfully on $D(X)$ by $g_i \mapsto T_i$ for $i=1,2,3$.

\end{theorem}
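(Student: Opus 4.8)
The plan is to first show that $g_k\mapsto T_k$ respects the defining relations of $G$, so that it extends to a homomorphism $\phi\colon G\to\Aut D(X)$, and then to deduce injectivity by recognising $\phi$ as a braid group action attached to an $A_3$-configuration.

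For the braid relations $T_iT_jT_i\cong T_jT_iT_j$, I would simply invoke Proposition~\ref{prop.2-chain}, which shows each pair $\{\cE_i,\cE_j\}$ is an $A_2$-configuration, together with the braid relation recorded for the case $h=1$ in Section~\ref{sec.Generalities}. The potential relation requires more. Starting from the $\cE_3$-orthogonality of $T_1\cE_2$ established in Proposition~\ref{prop.ext1}, the cone description~\eqref{eq.cone} forces $T_3(T_1\cE_2)\cong T_1\cE_2$, since the evaluation map has vanishing source. Applying the conjugation identity $T_{\Phi\cE}\cong\Phi\,T_{\cE}\,\Phi^{-1}$ with $\Phi=T_3$ and $\cE=T_1\cE_2$, this isomorphism is equivalent to $T_3$ commuting with $T_{T_1\cE_2}\cong T_1T_2T_1^{-1}$. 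Rewriting $T_1T_2T_1^{-1}\cong T_2^{-1}T_1T_2$ via the braid relation and then conjugating by $T_2$, the commutation becomes the statement that $T_1$ commutes with $T_2T_3T_2^{-1}$; unwinding this once more with the three braid relations yields exactly $T_1T_2T_3T_1\cong T_2T_3T_1T_2$. This establishes $\phi$.

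For faithfulness, I would feed the same two inputs, namely the pairwise $A_2$-configurations and $T_1\cE_2\in\cE_3^\perp$, into Proposition~\ref{prop.a23}(2) to conclude that $\{\cE_1,\cE_2,T_2\cE_3\}$ is an $A_3$-configuration. By~\cite[Theorem~1.3]{ST} its twists $T_1$, $T_2$, and $T_{T_2\cE_3}\cong T_2T_3T_2^{-1}$ then generate a subgroup of $\Aut D(X)$ isomorphic to $\Br_4$, with these three autoequivalences corresponding to the standard generators $\sigma_1,\sigma_2,\sigma_3$. Since $T_3\cong T_2^{-1}(T_2T_3T_2^{-1})T_2$, this subgroup coincides with $\langle T_1,T_2,T_3\rangle$, giving $\langle T_1,T_2,T_3\rangle\cong\Br_4$. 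It remains to match this with $G$: a short free-group computation shows that the substitution $g_3'=g_2g_3g_2^{-1}$ turns the presentation of $G$ into the standard presentation of $\Br_4$ on $g_1,g_2,g_3'$, since the relation $g_2g_3'g_2=g_3'g_2g_3'$ reduces to the braid relation for $g_2,g_3$, while $g_1g_3'=g_3'g_1$ reduces, modulo the braid relations, to the potential relation. Under the resulting isomorphism $G\cong\Br_4$ the assignment $g_k\mapsto T_k$ is carried to the faithful action above, so $\phi$ is injective, as required.

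I expect the main obstacle to be group-theoretic rather than geometric, since the essential categorical input has already been isolated in Propositions~\ref{prop.eq} and~\ref{prop.ext1}. The delicate point is the bookkeeping that converts the single relation $T_1\cE_2\in\cE_3^\perp$ into the potential relation, together with the verification of the isomorphism $G\cong\Br_4$: one must check that the change of generators $g_3'=g_2g_3g_2^{-1}$ is compatible with the identification $T_{T_2\cE_3}\cong T_2T_3T_2^{-1}$, so that the faithful $A_3$-configuration action is transported back to $\phi$ correctly. Conceptually, the potential relation is nothing but the far-commutation relation $\sigma_1\sigma_3=\sigma_3\sigma_1$ of $\Br_4$ written in the original generators, and the $A_3$-configuration supplies precisely this commutation.
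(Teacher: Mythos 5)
Your proposal is correct and follows essentially the same route as the paper: braid relations from Proposition~\ref{prop.2-chain}, the potential relation from $T_3(T_1\cE_2)\cong T_1\cE_2$ via Proposition~\ref{prop.ext1} and the conjugation identity, and faithfulness by converting $\{\cE_1,\cE_2,T_2\cE_3\}$ into an $A_3$-configuration and identifying $G\cong\Br_4$ through $g_3'=g_2g_3g_2^{-1}$. The only cosmetic difference is that the paper reduces the potential relation to the single isomorphism $T_2^{-1}\cE_1\cong T_3T_1\cE_2$ using Lemma~\ref{lemma.exten}(1), whereas you phrase it as the far-commutation of $T_1$ with $T_2T_3T_2^{-1}$ and unwind with the braid relations; both manipulations are valid and rest on the same geometric input.
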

\begin{proof}

    Using Proposition~\ref{prop.2-chain} and~\cite[Proposition~2.13]{ST}, $T_iT_jT_i\cong T_jT_iT_j$.
It remains to show $T_1T_2T_3T_1\cong T_2T_3T_1T_2$ in $\mathrm{Aut}\,D(X)$ or equivalently $T_2^{-1}T_1T_2\cong(T_3T_1)T_2(T_3T_1)^{-1}$.
Similarly to \cite[Proposition~2.13]{ST}, using that $T_{\Phi(\cE)} \cong \Phi T_{\cE} \Phi^{-1}$ for any $\Phi\in\mathrm{Aut}\,D(X)$, it is enough to show $T_2^{-1}\cE_1\cong T_3(T_1\cE_2)$.
By Proposition~\ref{prop.2-chain} and Lemma~\ref{lemma.exten}(1), we have $T_2^{-1}\cE_1\cong T_1\cE_2$.
Using Proposition~\ref{prop.ext1}, we have $T_3(T_1\cE_2)\cong T_1\cE_2$.
Combining gives the desired action.

For faithfulness, the assumptions of Proposition~\ref{prop.a23}(2) are satisfied by Propositions~\ref{prop.2-chain} and~\ref{prop.ext1}, and hence $\{\cE_1,\cE_2,T_2\cE_3\}$ is an $A_3$-configuration. (Note that Proposition~\ref{prop.a23} was given for $X$ projective, but quasi-projective suffices because Serre duality applies to the compactly-supported objects $\cE_l$.)
Then the action of the subgroup of $G$ generated by $\{g_1,g_2,g_2g_3g_2^{-1}\}$ is precisely the faithful action of $\Br_4$ on $D(X)$ from~\cite[Theorem~1.3]{ST}, since the conjugate $g_2g_3g_2^{-1}$ acts via $T_2T_3T_2^{-1} \cong T_{T_2\cE_3}$.
But $\{g_1,g_2,g_2g_3g_2^{-1}\}$ generates~$G$, and so $G\cong \Br_4$ and acts on $D(X)$ faithfully.
\end{proof}



\begin{thebibliography}{Craw}
\thispagestyle{empty}
\bibitem[Add]{Add} N. Addington, \textit{New derived symmetries of some hyperk\"ahler varieties}, Alg. Geom. \textbf{3} (2016), 223--260. 

\bibitem[BT]{BT} C. Brav and H. Thomas, \textit{Braid groups and Kleinian singularities}, Math. Ann. \textbf{351} (2011) 1005--1017.

\bibitem[CLS]{CLS} D. A. Cox, J. B. Little, and H. K. Schenck, \textit{Toric varieties}, Graduate Studies in Mathematics, vol. 124, American Mathematical Society, Providence, RI, 2011.

\bibitem[Craw]{Craw} A. Craw, \textit{An explicit construction of the McKay correspondence for $A$-Hilb$(\mathbb{C}^3)$}, J. Algebra \textbf{285} (2005), no. 2, 682--705.

\bibitem[CR]{CR} A. Craw and M. Reid, \textit{How to calculate $A$-Hilb$(\mathbb{C}^3)$}, S\'emin. Congr. \textbf{6} (2002), 129--154.

\bibitem[DW]{DW} W. Donovan and M. Wemyss, \textit{Twists and braids for general 3-fold flops}, J. Eur. Math. Soc. \textbf{21} (2015), no. 6, 1641--1701.

\bibitem[GM]{GM} J. Grant and B. Marsh, \textit{Braid groups and quiver mutation}, Pacific J. Math. \textbf{290} (2017), no. 1, 77--116. 
 
\bibitem[HW]{HW} Y. Hirano and M. Wemyss, \textit{Faithful actions from hyperplane arrangements}, Geom. Topol. \textbf{22} (2018) No. 6, 3395--3433.

\bibitem[Huyb]{Huy} D. Huybrechts, \textit{Fourier-Mukai transforms in algebraic geometry}, Oxford Mathematical Monographs, The Clarendon Press, Oxford University Press, Oxford, 2006. 

\bibitem[IR]{IR} Y. Ito and M. Reid, \textit{The McKay correspondence for finite subgroups of $\mathrm{SL}(3,\mathbb{C})$}, Higher-dimensional complex varieties (Trento, 1994), de Gruyter, Berlin, 1996, 221--240.  

\bibitem[Keat]{Keat} A. M. Keating, \textit{Dehn twists and free subgroups of symplectic mapping class groups}, J. Topol. \textbf{7} (2014), no. 2, 436--474.

\bibitem[Naka]{Nakamura} I. Nakamura, \textit{Hilbert schemes of abelian group orbits}, J. Alg. Geom. \textbf{10} (2001), no. 4, 757--779.

\bibitem[NV]{NV} A. Nordskova and Y. Volkov, \textit{Faithful actions of braid groups by twists along ADE-configurations of spherical objects}, arXiv:1910.02401.

\bibitem[Qiu]{Qiu} Y. Qiu, \textit{Decorated marked surfaces: spherical twists versus braid twists}, Math. Ann. \textbf{365} (2016), no. 1-2, 595--633.

\bibitem[QW]{QW} Y. Qiu and J. Woolf, \textit{Contractible stability spaces and faithful braid group actions}, Geom. Topol. \textbf{22} (2018), 3701--3760.

\bibitem[Reid]{Reid} M. Reid, \textit{McKay correspondence}, in Proc. of algebraic geometry symposium (Kinosaki, Nov 1996), T. Katsura (Ed.), 14--41, arXiv:alg-geom/9702016.

\bibitem[ST]{ST} P. Seidel and R. Thomas, \textit{Braid group actions on derived categories of coherent sheaves}, Duke Math. J. \textbf{108} (2001), no. 1, 37--108. 
 
\end{thebibliography}
\end{document}